\newtheorem{thm}{Theorem}[section]
\newtheorem{crl}[thm]{Corollary}
\newtheorem{lmm}[thm]{Lemma}
\newtheorem{prp}[thm]{Proposition}
\theoremstyle{definition}
\newtheorem{dfn}[thm]{Definition}
\theoremstyle{remark}
\title{A note on consistency conditions on dimer models}
\author{Akira Ishii and Kazushi Ueda}
\begin{document}
%

\maketitle


\section{Introduction}

Dimer models are introduced by string theorists
(see e.g. \cite{Franco-Hanany-Martelli-Sparks-Vegh-Wecht_GTTGBT,
Franco-Hanany-Vegh-Wecht-Kennaway_BDQGT,
Franco-Vegh_MSGTDM,
Hanany-Herzog-Vegh_BTEC,
Hanany-Kennaway_DMTD,
Hanany-Vegh})
to study supersymmetric quiver gauge theories
in four dimensions.
A dimer model is a bicolored graph on a 2-torus
which encode the information of a quiver with relations.
If a dimer model is non-degenerate,
then the moduli space $\scM_{\theta}$
of stable representations of the quiver
with dimension vector $(1, \dots, 1)$
with respect to a generic stability parameter $\theta$
in the sense of King \cite{King}
is a smooth toric Calabi-Yau 3-fold
\cite{Ishii-Ueda_08}.

Let $\scV = \bigoplus_v \scL_v$ be
the tautological bundle
on the moduli space $\scM_\theta$
and
\begin{equation} \label{eq:univ_morph}
 \phi : \bC \Gamma \to \End(\scV)
\end{equation}
be the universal morphism
from the path algebra $\bC \Gamma$
of the quiver with relations associated with a dimer model.
This map is not an isomorphism in general,
and it is easy to see that
the injectivity of this map is equivalent to
the {\em first consistency condition}
of Mozgovoy and Reineke \cite{Mozgovoy-Reineke}.
The path algebra $\bC \Gamma$ is a Calabi-Yau algebra
of dimension three in the sense of Ginzburg \cite{Ginzburg_CYA}
if the dimer model satisfies the first consistency condition
\cite{Mozgovoy-Reineke, Davison, Broomhead}.
This in turn implies
\cite{Bridgeland-King-Reid, Van_den_Bergh_NCR}
that $\phi$ is an isomorphism,
the functor
$$
 \RHom(\scV, \bullet) : D^b \coh \scM_\theta \to D^b \module \bC \Gamma
$$
is an equivalence of triangulated categories, and
$
 \bC \Gamma
$
is a non-commutative crepant resolution of
a Gorenstein affine toric 3-fold.

The first consistency condition is an algebraic condition,
which is not easy to check in examples.
In this paper, we show that
a more tractable condition,
given in Definition \ref{df:consistency},
is equivalent to the first consistency condition
under the non-degeneracy assumption:

\begin{thm} \label{th:main}
For a non-degenerate dimer model,
\begin{itemize}
\item the first consistency condition,
\item the consistency condition in Definition \ref{df:consistency}, and
\item the {\em properly-orderedness} in the sense of Gulotta \cite{Gulotta}
\end{itemize}
are equivalent.
\end{thm}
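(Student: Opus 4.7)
The plan is to establish the three-way equivalence through a cyclic chain of implications, each reducing to a concrete combinatorial statement about paths in the quiver $\Gamma$ and about zigzag paths on the universal cover of the dimer model. Since all three conditions are invariant under the natural torus action on $\scV$, throughout one may decompose paths by their weight in the character lattice of $\Aut(\scV)/\bC^{\times}$, which reduces every statement to a question about paths joining a fixed pair of vertices and carrying a fixed homology class on the torus.

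For the direction (first consistency) $\Rightarrow$ (Definition \ref{df:consistency}), I would use that injectivity of $\phi$ forces any two paths with common endpoints and the same image in $\End(\scV)$ to coincide in $\bC\Gamma$. Combined with non-degeneracy, whereby every edge appears in some perfect matching, this yields the required constraint on F-term equivalence classes of paths, following the philosophy of \cite{Broomhead, Mozgovoy-Reineke}: perfect matchings serve as the dictionary translating algebraic equality of path images into geometric data on the dimer model.

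For the direction (Definition \ref{df:consistency}) $\Rightarrow$ (properly-orderedness), one observes that properly-orderedness, in Gulotta's sense, is the combinatorial assertion that every zigzag path lifts to a simple curve of nontrivial homology on the universal cover and that the cyclic order of zigzag directions around each vertex is compatible with the planar structure. Each such statement can be extracted from Definition \ref{df:consistency} by applying it to short path configurations winding around a single vertex or crossing a prospective zigzag path; failure of any of the zigzag axioms produces a pair of F-term equivalent paths violating the hypothesis.

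The main obstacle, and the technical heart of the argument, is the direction (properly-orderedness) $\Rightarrow$ (first consistency). Here one must show that whenever two paths $p$ and $q$ have the same endpoints and the same image under $\phi$, their difference $p-q$ already lies in the F-term ideal. My strategy is to compare the lifts $\ptilde$ and $\qtilde$ to the universal cover and use the zigzag control granted by properly-orderedness to construct an explicit F-term homotopy between them, step by step reducing the combinatorial complexity of the discrepancy until the two lifts coincide. The scaffolding is provided by Gulotta's description of how zigzag paths subdivide the universal cover into regions with a rigid cyclic structure; the delicate point is to produce the homotopy efficiently enough that it terminates, rather than merely cycling through F-term equivalent representatives, and it is here that properly-orderedness (as opposed to weaker zigzag conditions) is essential.
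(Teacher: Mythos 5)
Your outline correctly identifies a workable cycle of implications, but at the two places where the theorem actually has content you assert the result rather than prove it, and in one of them you explicitly leave the decisive point open. For (properly-ordered) $\Rightarrow$ (first consistency) you propose to build an ``F-term homotopy'' between lifts of weakly equivalent paths and to reduce ``combinatorial complexity'' until it terminates, while admitting that ensuring termination is the delicate issue; that admission is precisely the missing proof. The paper resolves this not by a direct homotopy reduction but in two steps: first, properly-orderedness implies the zigzag consistency of Definition \ref{df:consistency} (Lemma \ref{lm:p-c}, where a pair of zigzag paths meeting twice in the same direction is shown to generate an infinite sequence of zigzag paths with pairwise distinct slopes, contradicting finiteness); second, consistency implies the first consistency condition (Lemma \ref{lm:consistency_implies_first_consistency}), by taking a weakly equivalent but inequivalent pair $(a,b)$ bounding a region of minimal area, launching the zigzag path $z$ along the first arrow of $a$, and showing via Corollary \ref{cr:minimal_path} and the minimality of the area that $z$ can cross neither $a$ nor $b$, forcing two zigzag paths to intersect twice in the same direction or self-intersect. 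Without some such global device (minimal area plus zigzag barriers, as encapsulated in Lemma \ref{lm:zigzag_twice}), your step-by-step reduction has no reason to terminate, so this direction is a genuine gap.

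The direction (first consistency) $\Rightarrow$ (Definition \ref{df:consistency}) is likewise only gestured at: saying that injectivity of $\phi$ and non-degeneracy make perfect matchings ``a dictionary'' gives no construction. What is needed (and what the paper does in Lemma \ref{lm:consistency2-proof-1}, Lemma \ref{lm:zigzag-loop}, and the lemma on homologically trivial zigzag paths) is, for each of the three possible zigzag pathologies, an explicit pair of weakly equivalent yet inequivalent paths; the perfect matching enters concretely, by counting intersections with the path so that factoring out small cycles terminates and so that the resulting cycle cannot equal a power of $\omega$, and Lemma \ref{lm:zigzag_twice} is what prevents the constructed paths from being deformed across the offending zigzag path. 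Finally, your middle step (consistency $\Rightarrow$ properly-ordered) is stated with a logical slip: both conditions are purely about zigzag paths, so ``failure of any of the zigzag axioms produces a pair of F-term equivalent paths violating the hypothesis'' is not the right mechanism; the correct (and short) argument is that of Lemma \ref{lm:c-p}, namely that a violation of the cyclic order at a node forces two zigzag paths to intersect twice in the same direction on the universal cover.
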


It is known that the consistency condition
in Definition \ref{df:consistency}
implies the non-degeneracy
\cite[Proposition 6.2]{Ishii-Ueda_DMSMCv1}.
Together with a work of Kenyon and Schlenker
\cite[Theorem 5.1]{Kenyon-Schlenker},
Theorem \ref{th:main} implies
a result of Broomhead \cite{Broomhead}
that an {\em isoradial} dimer model satisfies the first consistency condition.
Here we note that isoradiality is a strong condition,
and a large number of
otherwise well-behaved dimer models
fall out of this class.

We recall basic definitions on dimer models
in Section \ref{sc:definitions}.
The content of Section \ref{sc:consistency}
has bubbled off from \cite[Section 5]{Ishii-Ueda_DMSMCv1},
and the rest of \cite{Ishii-Ueda_DMSMCv1} will appear
in a separate paper.
In Section \ref{sc:properly-ordered},
we show that a dimer model satisfies the consistency condition
in Definition \ref{df:consistency} if and only if
it is {properly-ordered} in the sense of Gulotta \cite{Gulotta}.
Relations between consistency conditions on dimer models
are also discussed by Bocklandt \cite[Section 8]{Bocklandt_CYAWQP}.

{\bf Acknowledgment}:
We thank Alastair Craw,
Nathan Broomhead,
Ben Davison,
Dominic Joyce,
Alastair King,
Diane Maclagan,
Balazs Szendroi,
Yukinobu Toda,
Michael Wemyss and
Masahito Yamazaki
for valuable discussions.
We also thank the anonymous referee
for carefully reading the manuscript and
suggesting a number of improvements.
A.~I. is supported
by Grant-in-Aid for Scientific Research (No.18540034).
K.~U. is supported
by Grant-in-Aid for Young Scientists (No.20740037)
and Engineering and Physical Sciences
Research Council (EP/F055366/1).

\section{Dimer models and quivers}
 \label{sc:definitions}

Let $T = \bR^2 / \bZ^2$
be a real two-torus
equipped with an orientation.
A {\em bicolored graph} on $T$
consists of
\begin{itemize}
 \item a finite set $B \subset T$ of black nodes,
 \item a finite set $W \subset T$ of white nodes, and
 \item a finite set $E$ of edges,
       consisting of embedded closed intervals $e$ on $T$
       such that one boundary of $e$ belongs to $B$
       and the other boundary belongs to $W$.
       We assume that two edges intersect
       only at the boundaries.
\end{itemize}
A {\em face} of a graph is a connected component
of $T \setminus \cup_{e \in E} e$.
A bicolored graph $G$ on $T$ is called a {\em dimer model}
if every face is simply-connected.

A {\em quiver} consists of
\begin{itemize}
 \item a set $V$ of vertices,
 \item a set $A$ of arrows, and
 \item two maps $s, t: A \to V$ from $A$ to $V$.
\end{itemize}
For an arrow $a \in A$,
$s(a)$ and $t(a)$
are said to be the {\em source}
and the {\em target} of $a$
respectively.
A {\em path} on a quiver
is an ordered set of arrows
$(a_n, a_{n-1}, \dots, a_{1})$
such that $s(a_{i+1}) = t(a_i)$
for $i=1, \dots, n-1$.
We also allow for a path of length zero,
starting and ending at the same vertex.
The {\em path algebra} $\bC Q$
of a quiver $Q = (V, A, s, t)$
is the algebra
spanned by the set of paths
as a vector space,
and the multiplication is defined
by the concatenation of paths;
$$
 (b_m, \dots, b_1) \cdot (a_n, \dots, a_1)
  = \begin{cases}
     (b_m, \dots, b_1, a_n, \dots, a_1) & s(b_1) = t(a_n), \\
      0 & \text{otherwise}.
    \end{cases}
$$
A {\em quiver with relations}
is a pair of a quiver
and a two-sided ideal $\scI$
of its path algebra.
For a quiver $\Gamma = (Q, \scI)$
with relations,
its path algebra $\bC \Gamma$ is defined as
the quotient algebra $\bC Q / \scI$.
Two paths $a$ and $b$ are said to be {\em equivalent}
if they give the same element in $\bC \Gamma$.

A dimer model $(B, W, E)$ encodes
the information of a quiver
$\Gamma = (V, A, s, t, \scI)$
with relations
in the following way:
The set $V$ of vertices
is the set of connected components
of the complement
$
 T \setminus (\bigcup_{e \in E} e),
$
and
the set $A$ of arrows
is the set $E$ of edges of the graph.
The directions of the arrows are determined
by the colors of the nodes of the graph,
so that the white node $w \in W$ is on the right
of the arrow.
In other words,
the quiver is the dual graph of the dimer model
equipped with an orientation given by
rotating the white-to-black flow on the edges of the dimer model
by minus 90 degrees.

The relations of the quiver are described as follows:
For an arrow $a \in A$,
there exist two paths $p_+(a)$
and $p_-(a)$
from $t(a)$ to $s(a)$,
the former going around the white node
connected to $a \in E = A$ clockwise
and the latter going around the black node
connected to $a$ counterclockwise.
Then the ideal $\scI$
of the path algebra is
generated by $p_+(a) - p_-(a)$
for all $a \in A$.

A {\em small cycle} on a quiver coming from a dimer model
is the product of arrows surrounding only a single node
of the dimer model.
A path $p$ is said to be {\em minimal}
if it is not equivalent to a path containing a small cycle.
A path $p$ is said to be {\em minimum}
if any path from $s(p)$ to $t(p)$ homotopic to $p$ is equivalent
to the product of $p$ and a power of a small cycle.
For a pair of vertices of the quiver,
a minimum path from one vertex to another may not exist,
and will always be minimal when it exists.

Small cycles starting from a fixed vertex are equivalent to each other.
Hence the sum $\omega$ of small cycles
over the set of vertices is a well-defined
element of the path algebra.
For any arrow $a$,
the small cycles $\omega_{s(a)}$ and $\omega_{t(a)}$
starting from the source $s(a)$ and the target $t(a)$
of $a$ respectively satisfies
$$
 a \omega_{s(a)} = \omega_{t(a)} a.
$$
If follows that $\omega$ belongs to the center of the path algebra,
and there is the universal map
$$
 \bC \Gamma \to \bC \Gamma[\omega^{-1}]
$$
into the localization of the path algebra
by the multiplicative subset generated by $\omega$.
Two paths $a$ and $b$ are said to be {\em weakly equivalent}
if they are mapped to the same element
in $\bC \Gamma[\omega^{-1}]$,
i.e., there is an integer $i \ge 0$ such that
$a \omega^i = b \omega^i$ in $\bC\Gamma$.
Note that
the following holds for the paths of the quiver.

\begin{lmm}
For two paths $a$ and $b$ with the same source and target, the following are equivalent.
\begin{itemize}
\item $a$ and $b$ are homotopy equivalent.
\item There are integers $i, j \ge 0$ such that $a \omega^i = b \omega^j$ in $\bC\Gamma$.
\item There is an integer $i \ge 0$ such that either $(a, b \omega^i)$ or $(a \omega^i, b)$ is
a weakly equivalent pair.
\end{itemize}
\end{lmm}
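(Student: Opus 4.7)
The equivalences (2) $\Leftrightarrow$ (3) are an elementary algebraic rewriting. Given $a\omega^i = b\omega^j$ with $i, j \ge 0$, if $i \le j$ set $i' = j - i \ge 0$ and $k = i$; then $a\omega^k = (b\omega^{i'})\omega^k$, so $(a, b\omega^{i'})$ is a weakly equivalent pair, giving the first alternative in (3). The case $j < i$ is symmetric and yields the second alternative. Conversely, weak equivalence of $(a, b\omega^{i})$ or $(a\omega^i, b)$ immediately produces an equality of the form $a\omega^k = b\omega^{i+k}$ or $a\omega^{i+k} = b\omega^k$, which is a special case of (2).

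For (2) $\Rightarrow$ (1), observe that each defining relation $p_+(e) - p_-(e)$ of the ideal $\scI$ is a difference of two paths that together bound a simply connected face of the dimer model, hence a difference of paths that are homotopic in $T$; passage from the path algebra of the quiver to $\bC\Gamma$ therefore preserves the homotopy class of a path. Each small cycle $\omega_v$ is itself null-homotopic, being the boundary of a single face, so multiplication by any power of $\omega$ does not change the homotopy class. Hence $a\omega^i = b\omega^j$ in $\bC\Gamma$ forces $a \sim a\omega^i \sim b\omega^j \sim b$ as paths on $T$.

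The substantive direction is (1) $\Rightarrow$ (2). The plan is to lift to the universal cover $\pi\colon \bR^2 \to T$. Fix a lift $\utilde$ of the common source $u = s(a) = s(b)$ and let $\atilde$ and $\btilde$ be the unique lifts of $a$ and $b$ starting at $\utilde$. Homotopy equivalence of $a$ and $b$ ensures that $\atilde$ and $\btilde$ end at a common lift $\vtilde$ of the target. By splitting the concatenation $\atilde \cup \btilde$ at its self-intersections, one reduces to the situation in which $\atilde$ and $\btilde$ together bound a topological disk $R \subset \bR^2$ that is tiled by a finite number $N$ of faces of the lifted quiver. I then induct on $N$: when $N = 0$ the two lifts coincide, and hence $a = b$ in $\bC\Gamma$; for $N \ge 1$ I pick a face $F \subset R$ that shares a consecutive sub-path $\gamma$ with $\atilde$ (or with $\btilde$, by symmetry), and use the F-term relation at an edge of $\partial F$ together with the identity $\partial F = \omega_v$ (for a vertex $v$ of $\partial F$) to produce a homotopic lift $\atilde'$ whose bounded region with $\btilde$ is $R \setminus F$. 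The modification yields an identity of the form $a\omega^p = a'\omega^{p'}$ in $\bC\Gamma$ with $p, p' \ge 0$; applying the inductive hypothesis to $(a', b)$ and combining the two identities delivers the desired $a\omega^i = b\omega^j$.

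The main obstacle is the inductive step. Since arrows in the quiver cannot be reversed, one cannot directly replace the sub-path $\gamma$ of $\atilde$ by a complementary arc of $\partial F$; instead one must combine an F-term move at a boundary edge of $F$ with the insertion of the small cycle $\omega_v$, a step which necessarily introduces $\omega$-factors on both sides of the intended equality. The centrality of $\omega$ in $\bC\Gamma$ allows these factors to be collected at the endpoints of the paths, so that only non-negative exponents appear when the various identities produced along the induction are combined.
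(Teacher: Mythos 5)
The paper itself states this lemma without proof, so there is nothing to compare against; judging your argument on its own terms: the equivalence of the second and third conditions and the implication from the algebraic conditions to homotopy equivalence are fine (the ideal $\scI$ is spanned by differences of paths with the same endpoints, each relation $p_+(e)=p_-(e)$ and each small cycle being homotopically trivial). The genuine gap is in the inductive step of your proof that homotopic paths satisfy $a\omega^i=b\omega^j$. The boundary of a face $F$ of the quiver (dual to a node of the dimer model) is a \emph{directed} cycle, so if $\gamma$ is a directed subpath of $\partial F$, the complementary arc of $\partial F$ runs from the head of $\gamma$ back to its tail; hence there is no quiver path ``going around the other side of $F$'', and no combination of one F-term move with one insertion of $\omega_v$ produces a path $\tilde a'$ which together with $\tilde b$ bounds $R\setminus F$. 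The only elementary moves available are the substitution $p_+(e)\leftrightarrow p_-(e)$, which sweeps the path across the black--white \emph{pair} of faces adjacent to $e$ and requires the path to already contain all of $\partial F$ except the single arrow $e$, and insertion or removal of a small cycle (multiplication by $\omega$). An insertion does change the bounding $2$-chain by a single face, but with a sign dictated by the colour of the face and by the side on which you insert: with the standard orientation only faces of one colour can be cancelled by inserting into $a$ and only faces of the other colour by inserting into $b$, whereas the faces of $R$ sharing an arrow with $\tilde a$ are all of the unfavourable colour for $a$ (and likewise for $\tilde b$); in fact inserting $\omega_v$ and then performing an F-term move at an edge of $\partial F$ adjacent to $v$ merely replaces that insertion by the insertion of the small cycle of the neighbouring face, so no such combination yields your claimed identity $a\omega^p=a'\omega^{p'}$ with region $R\setminus F$. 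Moreover, after an insertion the new path traverses all of $\partial F$ and is no longer simple, so the structure ``embedded disk tiled by $N-1$ faces'' on which your induction hypothesis rests is destroyed, and the base case ``the lifts coincide'' no longer matches what the induction produces. (Your preliminary reduction by ``splitting at self-intersections'' also silently invokes the lemma for a loop versus a constant path, so the induction must in any case be run on a quantity that covers this.)

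To repair the argument you must confront exactly these points: for instance, induct on the total unsigned coefficient of the unique $2$-chain $D$ with $\partial D=[\tilde a]-[\tilde b]$, allow insertions on \emph{either} side according to the colour of the face being cancelled, re-split into simple pieces after each move, and prove that a cancellable face of the correct colour (or an applicable F-term move) always exists when $D\neq 0$; alternatively, work in the localization $\bC\Gamma[\omega^{-1}]$, where every arrow becomes invertible because $p_+(e)\,e$ and $e\,p_+(e)$ are small cycles, and verify that the elementary homotopies across faces of the quiver hold there up to explicit powers of $\omega$, keeping track of a well-defined exponent. Either route requires a real argument that your sketch does not yet contain, so as it stands the proof of the main implication is incomplete.
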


For example,
the paths $p$ and $q$
shown in Figure \ref{fg:inconsistent_path1}
are weakly equivalent,
but not equivalent.
They are homotopic and one has
$$
 \omega p = \omega q.
$$

\begin{figure}[htbp]
\centering
\input{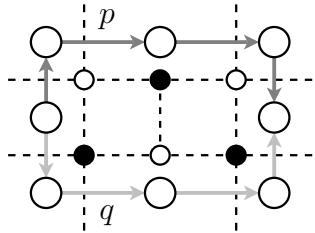}
\caption{A pair of weakly equivalent paths
which are not equivalent}
\label{fg:inconsistent_path1}
\end{figure}

A {\em perfect matching}
(or a {\em dimer configuration})
on a bicolored graph $G = (B, W, E)$
is a subset $D$ of $E$
such that for any node $v \in B \cup W$,
there is a unique edge $e \in D$
connected to $v$.
A dimer model is said to be {\em non-degenerate}
if for any edge $e \in E$,
there is a perfect matching $D$
such that $e \in D$.

A {\em zigzag path} is a path on a dimer model
which makes a maximum turn to the right on a white node
and to the left on a black node.
Note that it is not a path on a quiver.
We assume that a zigzag path does not have an endpoint,
so that we can regard a zigzag path
as a sequence $(e_i)$ of edges $e_i$ parameterized by $i \in \bZ$,
up to translations of $i$.
Figure \ref{fg:zigzag} shows an example
of a part of a dimer model
and a zigzag path on it.

\begin{figure}[htbp]
\centering
\input{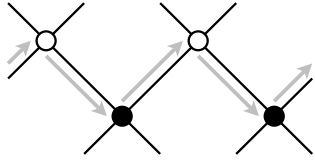}
\caption{A zigzag path}
\label{fg:zigzag}
\end{figure}

%

\begin{figure}
\centering
\begin{minipage}{.4 \linewidth}
\centering
\input{conifold_bt.pst}
\caption{A dimer model}
\label{fg:conifold_bt}
\end{minipage}
\begin{minipage}{.45 \linewidth}
\centering
\input{conifold_quiver.pst}
\caption{The corresponding quiver}
\label{fg:conifold_quiver}
\end{minipage}
\end{figure}

\begin{figure}
\begin{minipage}{\linewidth}
\centering
\input{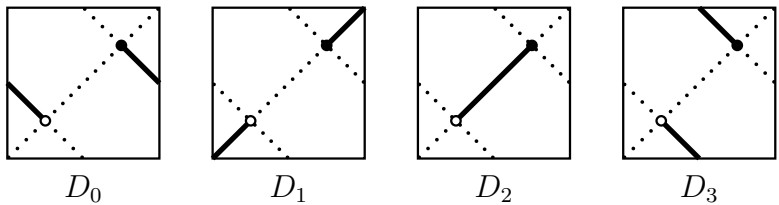}
\caption{Four perfect matchings}
\label{fg:conifold_dimers}
\end{minipage}
\end{figure}

%
%

As an example, consider the dimer model
in Figure \ref{fg:conifold_bt}.
The corresponding quiver is shown in Figure \ref{fg:conifold_quiver},
whose relations are given by
$$
 \scI = (d b c - c b d, d a c - c a d, a d b - b d a, a c b - b c a).
$$
This dimer model is non-degenerate,
and has four perfect matchings $D_0, \dots, D_3$
shown in Figure \ref{fg:conifold_dimers}.

We end this section with the following lemma:

\begin{lmm} \label{lm:exists_minimal_path}
Assume that a dimer model has a perfect matching $D$.
Then for any path $p$ on the quiver,
there are another path $q$ and
a non-negative integer $k$
such that $p$ is equivalent to $q \omega^k$ and
$q$ is not equivalent to a path containing a small cycle.
\end{lmm}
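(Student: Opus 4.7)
The plan is to use the perfect matching $D$ to build a nonnegative integer-valued degree function on paths that is invariant under the relations of $\bC \Gamma$ and equals $1$ on every small cycle; an induction on this degree will then strip small cycles off $p$ one at a time.

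First I would fix the perfect matching $D$ and define $\deg_D(a) = 1$ if the edge $a$ lies in $D$ and $\deg_D(a) = 0$ otherwise, extended additively to paths. The first nontrivial step is to check that $\deg_D$ descends to equivalence classes, i.e.\ that $\deg_D(p_+(a)) = \deg_D(p_-(a))$ for every arrow $a$. This is a short case analysis using the defining property of a perfect matching, namely that exactly one edge of $D$ is incident to each node of the dimer. If $a \in D$, then at the white node connected to $a$ the unique $D$-edge is $a$ itself, which is removed from $p_+(a)$, so $\deg_D(p_+(a)) = 0$; the same holds for $p_-(a)$. If $a \notin D$, then each of the white and black nodes attached to $a$ has a unique other incident edge belonging to $D$, and that edge appears in $p_+(a)$ (resp.\ $p_-(a)$) exactly once, so both sides have $\deg_D = 1$. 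The same analysis applied to a small cycle $\omega_v$, which encircles a single node of the dimer, gives $\deg_D(\omega_v) = 1$.

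Next I would induct on $\deg_D(p)$. If $\deg_D(p) = 0$, then $p$ cannot be equivalent to any path containing a small cycle (since such a path would have $\deg_D \ge 1$), so we set $q = p$ and $k = 0$. For the inductive step, if $p$ is itself not equivalent to a path containing a small cycle we are again done; otherwise, $p$ is equivalent to some $p_1 \omega_v p_2$. Using the centrality of $\omega$, which follows from the identity $a \omega_{s(a)} = \omega_{t(a)} a$ recorded in the text, this is equal to $(p_1 p_2)\omega$ in $\bC \Gamma$. Since $\deg_D(p_1 p_2) = \deg_D(p) - 1$, the induction hypothesis produces a minimal $q$ and an integer $k - 1 \ge 0$ with $p_1 p_2$ equivalent to $q \omega^{k-1}$, and hence $p$ equivalent to $q \omega^k$, as desired.

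The only real obstacle is verifying the equivalence-invariance of $\deg_D$; once that is in hand the induction is immediate, because $\deg_D$ takes values in $\bZ_{\ge 0}$ and drops strictly whenever one small cycle is pulled out. It is essential here that $D$ be a perfect matching rather than merely a subset of edges: without the ``exactly one incident edge at every node'' property, the case analysis that balances $\deg_D(p_+(a))$ against $\deg_D(p_-(a))$ fails, and the non-degeneracy assumption is precisely what provides such a $D$.
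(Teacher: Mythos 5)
Your proposal is correct and follows essentially the same route as the paper: the crossing number with the perfect matching $D$ (your $\deg_D$) is a non-negative integer that equals $1$ on every small cycle and drops by one each time a small cycle is factored out, which forces the process to terminate. The only difference is that you spell out the check that $\deg_D(p_+(a))=\deg_D(p_-(a))$, i.e.\ the invariance of the crossing number under equivalence, which the paper leaves implicit here and only records later.
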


\begin{proof}
Consider the number of times the path $p$ crosses $D$.
Then this is a non-negative integer
which decreases by one
as one removes a small cycle from the path.
\end{proof}

The statement of Lemma \ref{lm:exists_minimal_path} can be false
if there is no perfect matching:
Figure \ref{fg:no-perfect-matching}
shows an example of a dimer model
without any perfect matching,
which we learned from Broomhead and King.
One can see from the relation
$$
 a = e a d c b
$$
that
$$
 c b f e a d
  = c b f e^2 a d c (b d)
  = c b f e^3 a d c (b d) c (b d)
  = \cdots,
$$
which shows that the loop
$c b f e a d$ can be divided by any power of the small cycle
$b d$.

\begin{figure}[htbp]
\centering
\input{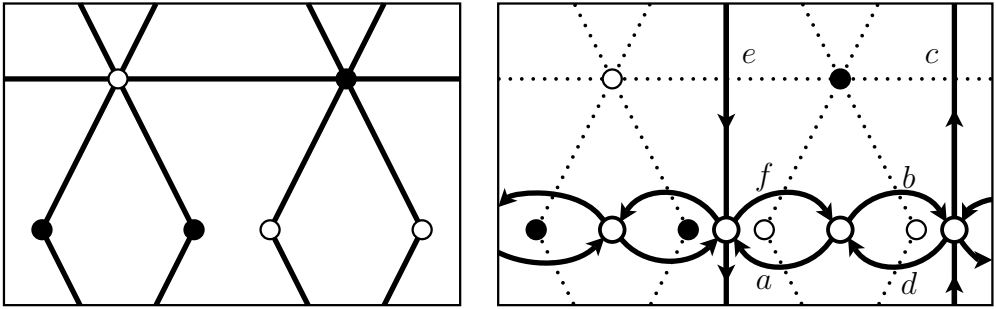}
\caption{A dimer model without any perfect matching}
\label{fg:no-perfect-matching}
\end{figure}

\section{Consistency conditions for dimer models}
 \label{sc:consistency}

The following notion is due to
Duffin \cite{Duffin} and Mercat \cite{Mercat_DRSIM}:

\begin{dfn} \label{df:isoradial}
A dimer model is {\em isoradial}
if one can choose an embedding of the graph into the torus
so that every face of the graph is a polygon
inscribed in a circle of a fixed radius
with respect to a flat metric on the torus.
Here, the circumcenter of any face must be contained
in the face.
\end{dfn}

A dimer model is isoradial
if and only if zigzag paths behave like straight lines:

\begin{thm}[{Kenyon and Schlenker \cite[Theorem 5.1]{Kenyon-Schlenker}}]
 \label{th:Kenyon-Schlenker}
A dimer model is isoradial
if and only if the following conditions are satisfied:
\begin{enumerate}
 \item
Every zigzag path is a simple closed curve.
 \item
The lift of any pair of zigzag paths to the universal cover
of the torus share at most one edge.
\end{enumerate}
\end{thm}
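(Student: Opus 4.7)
The plan is to prove both implications by interpreting zigzag paths as geodesics in the flat rhombus tiling associated to an isoradial embedding.

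For the forward direction, suppose the dimer model admits an isoradial embedding with common circumradius $r$. I would associate to each edge $e$ of the graph the rhombus $R_e$ whose vertices are the two endpoints of $e$ together with the circumcenters of the two faces adjacent to $e$. The assumption that each circumcenter lies inside its face guarantees that $R_e$ is a non-degenerate rhombus with all four sides of length $r$, and the collection $\{R_e\}_{e \in E}$ tiles the torus. The turning rule defining a zigzag path translates exactly to passing through a sequence of rhombi that share a common pair of parallel sides, so tracking the midpoints of the edges along a zigzag path yields a geodesic on the flat torus $T$. Since the graph has only finitely many edges, such a geodesic must close up to a simple closed curve, giving (1). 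Two distinct zigzag paths lift to distinct straight lines in the universal cover $\bR^2$, and two distinct straight lines meet in at most one point, giving (2).

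For the reverse direction, assume (1) and (2); the goal is to construct an isoradial embedding from the zigzag data. To each zigzag path $Z$ I would assign a unit tangent direction $\xi_Z \in S^1$. Each edge $e$ lies on exactly two zigzag paths $Z_1(e), Z_2(e)$ (one passing each of its endpoints), and I would use $\xi_{Z_1(e)}$ and $\xi_{Z_2(e)}$ to build a unit rhombus $R_e$ whose sides are parallel to these two directions, placing $e$ as one of its diagonals. Condition (1) makes each $\xi_Z$ a globally meaningful direction along $Z$, and condition (2) ensures $\xi_{Z_1(e)} \neq \pm\xi_{Z_2(e)}$, so each $R_e$ is non-degenerate. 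I then need to verify that the $R_e$'s glue to a flat metric on the torus: around each node of the dimer model the rhombus angles sum to $2\pi$, and parallel transport along any closed loop in the dual quad graph is trivial.

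The main obstacle will be exhibiting a globally consistent choice of $\{\xi_Z\}_Z$ for which every rhombus angle is strictly positive, so that every circumcenter lies in the interior of its face. Consistency of the angles around a single node follows from (1): the cyclic sequence of zigzag paths incident to that node traverses it once, so the rhombus corners fit together to cover an angle of exactly $2\pi$. The non-overlap clause (2) provides enough freedom in the choice of the $\xi_Z$'s, since any two zigzag paths sharing at most one edge correspond to transverse directions; a generic selection then avoids degeneracies and yields strict positivity. Assembling these ingredients on the finite combinatorial data of the dimer model — which is the technical heart of the Kenyon--Schlenker argument — the universal cover of the resulting flat torus realizes the isoradial embedding.
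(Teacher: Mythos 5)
First, note that the paper does not prove this statement at all: it is quoted from Kenyon and Schlenker, so there is no internal proof to compare you against, and your proposal has to stand on its own. Your forward direction is, in outline, the standard argument, but ``geodesic'' overstates what you get: in the rhombic tiling attached to an isoradial embedding, a zigzag path crosses a sequence of edges all parallel to one fixed unit vector $\xi$, and the displacement between consecutive crossed edges is the other side of each rhombus, a unit vector which lies strictly on one side of $\xi$ because the rhombi are positively oriented and non-degenerate. The lift is therefore a monotone staircase, not a straight line; monotonicity (not literal straightness) is what gives simplicity, closing up, that two strips with distinct directions share at most one rhombus, and that two strips with the same direction share none. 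This is repairable with care.

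The reverse direction, however, has a genuine gap exactly at the step you defer. A generic choice of the unit vectors $\xi_Z$ gives non-degenerate rhombi but cannot give an embedding: what is needed is a \emph{sign and ordering} condition, namely that at every crossing the rhombus built from $(\xi_{Z_1},\xi_{Z_2})$ is positively oriented, and that around every node \emph{and} every face-circumcenter the rhombus angles are strictly positive and sum to exactly $2\pi$. Your claim that the angle sum around a node ``follows from (1)'' is unjustified: it holds only if the vectors $\xi_Z$ of the zigzag paths leaving that node wind exactly once around $S^1$ in the same cyclic order in which the paths leave the node, i.e.\ only if the assignment is compatible with the local combinatorics at every node simultaneously. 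This compatibility is essentially the properly-ordered condition of Definition \ref{df:properly-ordered} (cf.\ Lemma \ref{lm:c-p}, which has to be \emph{proved} from a consistency hypothesis), and establishing that conditions (1) and (2) force the existence of such a globally consistent choice --- on the torus the natural candidate is $\xi_Z$ proportional to the homology class of $Z$, and one must then show crossing paths have non-parallel classes and that slopes respect the cyclic order at each node --- is the technical heart of the Kenyon--Schlenker theorem. Genericity cannot substitute for it, since a generic but wrongly ordered choice produces rhombi of both orientations and no embedding; likewise (2) does not ``ensure $\xi_{Z_1(e)}\neq\pm\xi_{Z_2(e)}$'' --- the $\xi$'s are yours to choose, and (2) is an input to the existence proof, not a property of an arbitrary choice. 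As written, the converse direction assumes precisely what has to be proven.
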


The following condition is introduced by Mozgovoy and Reineke:

\begin{dfn}[{\cite[Condition 4.12]{Mozgovoy-Reineke}}] \label{df:MR}
A dimer model is said to satisfy the {\em first consistency condition}
if weakly equivalent paths are equivalent.
\end{dfn}


We regard a zigzag path on the universal cover
as a sequence $(e_i)$ of edges $e_i$ parameterized by $i \in \bZ$,
up to translations of $i$.

\begin{dfn}\label{df:intersection}
Let $z=(e_i)$ and $w=(f_i)$ be two zigzag paths on the universal cover.
We say that $z$ and $w$ intersect
if there are $i, j \in \bZ$ with $e_i=f_j$
such that if $u, v$ are the maximum and the minimum of $t$
with $e_{i+t}=f_{j-t}$ respectively,
then $u-v \in 2\bZ$.
In this case, the sequence
$
 ( e_{i+v} = f_{j-v},
   e_{i+v+1} = f_{j-v-1}, \dots,
   e_{i+u} = f_{j-u})
$
of intersections is counted as a single intersection.
We say that $z$ has a self-intersection if there is a pair $i \ne j$
with $e_i = e_j$ such that
the directions of $z$ at $e_i$ and $e_j$ are opposite,
and $u - v \in 2\bZ$ for $u$ and $v$ defined similarly as above.
We say that $z$ is homologically trivial
if the map $i \mapsto e_i$ is periodic.
\end{dfn}
Note that if $u-v>0$ in the above definition,
then the nodes between $e_v$ and $e_u$ are divalent.
According to this definition,
there are cases where $z$ and $w$ have a common nodes or common edges,
but they do not intersect
as shown in Figure \ref{fg:zigzag-intersection}.
The assumption $u - v \in 2 \bZ$ is needed
to remove the effect of a divalent node;
if there is no divalent node,
then a pair of zigzag paths intersect
if and only if they have a common edge.

\begin{figure}[htbp]
\centering
\input{zigzag-intersection.pst}
\caption{Examples of an intersection (left) and
a non-intersection (right)}
\label{fg:zigzag-intersection}
\end{figure}

The following condition is slightly weaker than
isoradiality, and easy to check in examples:

\begin{dfn} \label{df:consistency}
A dimer model is said to be {\em consistent} if
\begin{itemize}
 \item
there is no homologically trivial zigzag path,
 \item
no zigzag path has a self-intersection
on the universal cover, and
 \item
no pair of zigzag paths intersect each other on the universal cover
in the same direction more than once.
\end{itemize}
\end{dfn}

Here, the third condition means that
if a pair $(z, w)$ of zigzag paths has two intersections $a$ and $b$
and the zigzag path $z$ points from $a$ to $b$,
then the other zigzag path $w$ must point from $b$ to $a$. 

\begin{figure}[htbp]
\begin{minipage}{.45 \linewidth}
\centering
\input{inconsistent-2-zigzag.pst}
\caption{A homologically trivial zigzag path}
\label{fg:inconsistent-2-zigzag}
\end{minipage}
\begin{minipage}{.45 \linewidth}
\centering
\input{inconsistent.pst}
\caption{An inconsistent dimer model}
\label{fg:inconsistent}
\end{minipage}
\centering
\input{inconsistent_zigzag_1.pst}
\caption{A pair of zigzag paths in the same direction
intersecting twice}
\label{fg:inconsistent_zigzag}
\end{figure}

Figure \ref{fg:inconsistent-2-zigzag} shows a part of
an inconsistent dimer model
which contains a homologically trivial zigzag path.
Figure \ref{fg:inconsistent} shows an inconsistent dimer model,
which contains a pair of zigzag paths
intersecting in the same direction twice
as in Figure \ref{fg:inconsistent_zigzag}.

On the other hand,
a pair of zigzag paths going in the opposite direction
may intersect twice in a consistent dimer model.
Figure \ref{fg:P1P1_II_zigzag} shows a pair of such zigzag paths
on a consistent dimer model
in Figure \ref{fg:P1P1_II}.

\begin{figure}[htbp]
\begin{minipage}{\linewidth}
\centering
\input{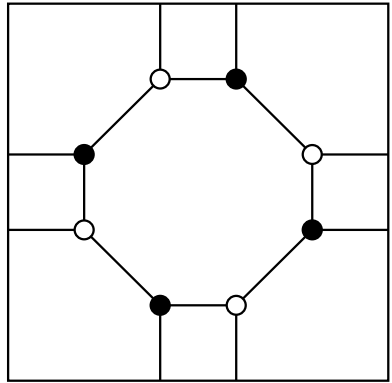}
\caption{A consistent non-isoradial dimer model}
\label{fg:P1P1_II}
\end{minipage}
\end{figure}

\begin{figure}[htbp]
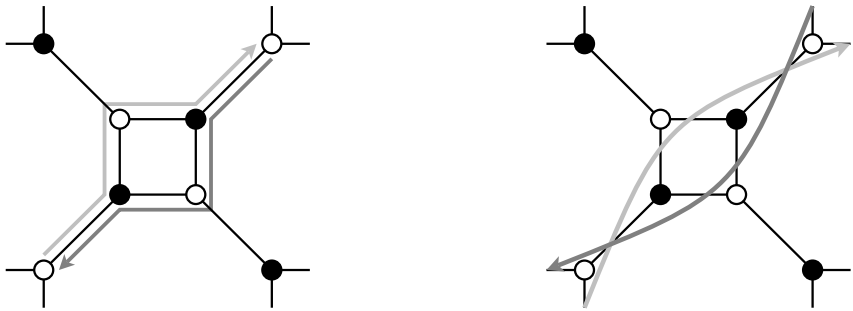

\begin{minipage}{.45 \linewidth}
\centering
\input{consistent_1.pst}
\end{minipage}
\begin{minipage}{.45 \linewidth}
\centering
\input{consistent_2.pst}
\end{minipage}
\caption{A pair of zigzag paths in the opposite direction
intersecting twice}
\label{fg:P1P1_II_zigzag}
\end{figure}

To obtain a criterion for the minimality of a path,
we discuss the intersection of a path of the quiver and a zigzag path.
Note that paths of the quiver and zigzag paths are both regarded
as sequences of arrows of the quiver,
where the former are finite and the latter are infinite.

\begin{dfn}
Let $p=a_1 a_2 \dots$ be a path of the quiver ($a_i \in A$) and
$z= (b_i)_{i\in \bZ}$ be a zigzag path.
We say $p$ intersects $z$ at an arrow $a$
if there are $i, j$ with $a=a_i=b_j \in A = E$,
satisfying the following condition:
If $u, v$ denote the maximum and the minimum of $t$
with $a_{i+t}=b_{j-t}$ respectively,
then $u-v$ is even.
In this case, the sequence $(a_{i+v} = b_{j-v}, \dots, a_{i+u}=b_{j-u})$
is counted as a single intersection.
\end{dfn}
Figure \ref{fg:path-zigzag-non-intersection} shows
an example of a non-intersection; the path shown in dark gray
does not intersect the zigzag path shown in light gray.
Note that the dark gray path is equivalent to the dashed path,
which does not have a common edge (or an arrow)
with the light gray path.
\begin{figure}[htbp]
\centering
\input{path-zigzag-non-intersection.pst}
\caption{An example of a non-intersection}
\label{fg:path-zigzag-non-intersection}
\end{figure}

The following lemma is obvious from the definition
of the equivalence relations of paths:


\begin{lmm} \label{lm:zigzag_twice}
Let $z$ be a zigzag path on the universal cover.
Suppose that a path $p'$ is obtained from another path $p$
by replacing $p_+(a) \subset p$ with $p_-(a)$
or the other way around
for a single arrow $a$,
as in the definition of the equivalence relations of paths.
If neither $ap_+(a)$ nor $p_+(a)a$ is a part of $p$,
then there is a natural bijection between
the intersections of $z$ and $p$ and those of $z$ and $p'$.
If $a$ is not a part of $p$,
then this bijection preserves the order of intersections
along $z$.
\end{lmm}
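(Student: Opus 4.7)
Write $p = p_1 \cdot p_+(a) \cdot p_2$ and $p' = p_1 \cdot p_-(a) \cdot p_2$, listing the arrows of $p_+(a) = c_1, c_2, \dots, c_m$ and $p_-(a) = d_1, d_2, \dots, d_k$ in their traversal order. The $c_i$ are exactly the edges at the white endpoint $w$ of $a$ other than $a$ itself (consecutively clockwise around $w$), and the $d_j$ are exactly the edges at the black endpoint $b$ of $a$ other than $a$. The plan is a case analysis on the local behavior of $z$ near the edge $a$. Every maximal matching segment of $z$ against $p$ (or $p'$) either lies entirely within $p_1$ or $p_2$---such \emph{far} intersections correspond tautologically between $p$ and $p'$---or meets $p_+(a)$ (resp.\ $p_-(a)$), in which case it originates from a visit of $z$ to a lift of $w$ (resp.\ $b$) on the universal cover, since every arrow of $p_+(a)$ is incident to $w$ and every arrow of $p_-(a)$ is incident to $b$. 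By the maximum-right-turn rule at $w$, each visit of $z$ to $w$ uses a pair of edges consecutive in the cyclic order at $w$; the $m+1$ such pairs are $(a, c_m), (c_m, c_{m-1}), \dots, (c_2, c_1), (c_1, a)$. Symmetrically, visits of $z$ to $b$ use pairs drawn from $(a, d_k), \dots, (d_1, a)$.

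I handle each type of pair. For an \emph{interior pair} $(c_{j+1}, c_j)$ at $w$ with $2 \le j \le m-2$, the arrows $c_j, c_{j+1}$ appear consecutively in $z$ in the reverse order of their appearance in $p_+(a)$, producing a length-$2$ matching segment with $u - v = 1$ odd, hence not counted; since no $d_i$ appears in $z$ for this visit, the $p_-(a)$ side contributes nothing, and the local counts match. For a \emph{boundary-interior pair} ($j = 1$ or $j = m-1$), the matching segment may extend into $p_1$ or $p_2$ by some additional length $\ell \ge 0$; the matching in $p'$ at the corresponding portion of $z$ consists solely of that extension (since the $d_j$'s, being at $b$, do not coincide with any $c_i$ or the relevant edges of $z$), giving a segment of length $\ell$ in $p'$ versus $\ell + 2$ in $p$, the same parity of $u - v$, and the same counted status, so I pair them. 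Most delicate are the pairs involving $a$: by the maximum-left-turn rule at $b$, the pair $(c_1, a)$ at $w$ forces $z$ to exit $b$ via $d_k$, so locally $z$ reads $\dots, c_1, a, d_k, \dots$. The hypothesis $a p_+(a) \not\subset p$ forbids the matching at $c_1$ in $p$ from extending backward across the boundary of $p_+(a)$ into an arrow $a$ immediately preceding $p_+(a)$; other extensions are blocked for local geometric reasons, so the segment at $c_1$ has length $1$ and gives one counted intersection, paired with the symmetrically-constructed length-$1$ segment at $d_k$ in $p'$ (using $p_+(a) a \not\subset p$). The pair $(a, c_m)$ is handled symmetrically, yielding $c_m \leftrightarrow d_1$. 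Summing per-visit pairings over all lifts of $w$ and $b$ visited by $z$ gives the global bijection.

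For the order-preservation statement, the pairings from interior and boundary-interior visits share the extension portion of their matching segments in $p_1 \cup p_2$, so they do not move the intersection's location along $z$ by more than truncating two arrows at one end; only the pairings $c_1 \leftrightarrow d_k$ and $c_m \leftrightarrow d_1$ shift the position along $z$ by exactly $\pm 2$, with the intermediate $z$-position occupied by the arrow $a$ itself in the local segment $\dots, c_1, a, d_k, \dots$ or $\dots, d_1, a, c_m, \dots$. Under the hypothesis $a \not\in p$ (and hence $a \not\in p'$), no intersection of $z$ with $p$ or $p'$ can lie at this intermediate position, so the $\pm 2$ shifts in the bijection never cause a reordering of intersections along $z$. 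The main obstacle in the plan is the careful bookkeeping of matching-segment lengths and the verification that paired segments in $p$ and $p'$ always differ in length by an even number; the two hypotheses $a p_+(a), p_+(a) a \not\subset p$ are precisely what is required to rule out the parity-breaking extensions that could otherwise arise in degenerate situations such as $m = 1$ or $k = 1$.
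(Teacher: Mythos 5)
The paper gives no proof of this lemma at all --- it is introduced with ``the following lemma is obvious from the definition of the equivalence relations of paths'' --- so the only question is whether your case analysis actually establishes it. Your overall strategy (localize at the visits of $z$ to the white node $w$ and the black node $b$ of $a$, use the turn rules to see which adjacent pairs of edges a visit can use, do parity bookkeeping on maximal shared segments, and use the hypotheses to block an extension through the arrow $a$ itself) is the natural one. But the bookkeeping has a genuine gap at exactly the delicate point: a maximal shared segment of $z$ with $p$ need not consist of part of $p_+(a)$ plus an extension on \emph{one} side; it can contain \emph{all} of $p_+(a)$ and extend into $p_1$ and $p_2$ simultaneously, in which case after the replacement it does not become a single shorter segment but breaks into two segments of $p'$ separated by $p_-(a)$. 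Concretely, let $w$ be trivalent with $p_+(a)=c_1c_2$, let $z$ be the zigzag path that enters $w$ along $c_2$ and leaves along $c_1$, let $x$ be the edge of $z$ traversed immediately after $c_1$ and $y$ the edge traversed immediately before $c_2$. Then the quiver path $p$ crossing, in temporal order, $x, c_1, c_2, y$ (a path weaving along $z$; it already exists in the hexagonal dimer model) contains $p_+(a)$ with neighbouring arrows $x,y\neq a$, so your hypotheses hold; its shared edges with $z$ form one maximal segment of length four, whereas in $p'$ the shared edges $x$ and $y$ form two separate length-one segments. Your ``boundary-interior'' case, which allows an extension of length $\ell$ on one side only and concludes ``length $\ell$ versus $\ell+2$, same parity,'' does not apply here, and the counted statuses do not obviously match (an even segment is not an intersection, while two odd segments are two intersections). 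The sentence ``other extensions are blocked for local geometric reasons'' is precisely where this configuration is swept under the rug, yet it is the real content of the lemma: either such configurations must be excluded by an argument, or the bookkeeping must be refined to handle segments that traverse the whole of $p_+(a)$; your proof does neither, and the order-preservation claim rests on the same unproved bookkeeping.

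Two smaller points. The assertion that no $d_j$ coincides with any $c_i$ or with the nearby edges of $z$ is not automatic (it fails if there are two edges joining $w$ and $b$, or in the presence of divalent nodes), and the claim that interior visits give segments of length exactly two needs the same caveat; since the definition of intersection was designed with divalent nodes in mind, these degenerate cases should be addressed rather than assumed away.
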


Because $a p_+(a)$ and $p_+(a) a$ are small cycles,
the first half of Lemma \ref{lm:zigzag_twice}
immediately gives the following:

\begin{crl} \label{cr:zigzag-loop}
A minimal path which does not intersect a zigzag path $z$
cannot be equivalent to a path intersecting $z$.
\end{crl}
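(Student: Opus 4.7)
The plan is to trace through any equivalence chain and invoke Lemma \ref{lm:zigzag_twice} at every step. Let $p$ be a minimal path that does not intersect $z$, and suppose for contradiction that $p$ is equivalent to some path $p'$ that does intersect $z$. By definition, the equivalence relation on paths is generated by single-move replacements, so we may choose a finite sequence $p = p_0, p_1, \dots, p_n = p'$ where each $p_{i+1}$ is obtained from $p_i$ by a single swap of $p_+(a) \leftrightarrow p_-(a)$ at some arrow $a$.

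The hypothesis of Lemma \ref{lm:zigzag_twice} requires that neither $a\, p_+(a)$ nor $p_+(a)\, a$ appears as a subpath of $p_i$ (with the symmetric statement for the reverse direction of the move, obtained by swapping the roles of $p_i$ and $p_{i+1}$). These two expressions are precisely the small cycles at $t(a)$ and $s(a)$ around the white node incident to $a$, and the analogous expressions $a\, p_-(a)$, $p_-(a)\, a$ are the small cycles around the black node. The key observation is that minimality is an invariant of the equivalence class: any path equivalent to $p_i$ is equivalent to $p_0 = p$ by transitivity, and since $p$ is minimal, no such path contains a small cycle. In particular no $p_i$ contains a small cycle as a subpath, so the hypothesis of Lemma \ref{lm:zigzag_twice} is automatically satisfied at each step.

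Applying the lemma inductively, the set of intersections of $z$ with $p_i$ is in natural bijection with that of $z$ with $p_{i+1}$ for every $i$. Hence the number of intersections of $z$ with $p_n = p'$ equals the number of intersections of $z$ with $p_0 = p$, which is zero, contradicting the assumption that $p'$ intersects $z$. The only real substance of the argument is the reduction to per-step bookkeeping, and the only point requiring care is the appeal to minimality to exclude small cycles from the intermediate paths $p_i$; once this is recognised, no genuine obstacle remains.
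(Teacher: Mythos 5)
Your proof is correct and is essentially the paper's own argument: the paper proves the corollary in one line by observing that $ap_+(a)$ and $p_+(a)a$ are small cycles, so minimality (which passes to every path in the equivalence chain) guarantees the hypothesis of Lemma \ref{lm:zigzag_twice} at each single-move step, and the resulting bijections preserve the intersection count with $z$. Your write-up just makes that chain-of-moves bookkeeping explicit.
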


Lemma \ref{lm:zigzag_twice} also gives the following:

\begin{crl} \label{cr:minimal_path}
Let $p$ be a path of the quiver.
If there is no zigzag path that intersects $p$
more than once in the same direction on the universal cover,
then $p$ is minimal.
\end{crl}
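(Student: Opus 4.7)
The plan is to prove the contrapositive: if $p$ is not minimal, then some zigzag path intersects $p$ more than once in the same direction on the universal cover. Since $p$ is not minimal, there is a path $p^{*}$ equivalent to $p$ that contains a small cycle $\omega_v$ as a subpath. By the black/white symmetry I may assume $\omega_v$ surrounds a white node $w$. Let $a_s$ and $a_r$ denote respectively the first and the last arrow of this $\omega_v$-subpath inside $p^{*}$; these are two distinct edges at $w$ which, by combining the cyclic temporal order of the small cycle at $w$ with the maximum-right-turn rule at white nodes, form the pair of edges at $w$ used by a uniquely determined zigzag path $z$, with $a_s$ preceding $a_r$ in the indexing of $z$.

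I then verify directly that $z$ intersects $p^{*}$ at $a_s$ and at $a_r$ as two separate single-arrow (non-mergeable) intersections: the arrows adjacent to $a_s$ and $a_r$ within $\omega_v$ are other edges at $w$, whereas those adjacent to them within $z$ lie at the neighbouring black nodes, and this forces the parity condition $u - v \in 2\mathbb{Z}$ in the definition to be satisfied only at $u = v = 0$. Since $a_s$ precedes $a_r$ both along $p^{*}$ (by construction as first and last arrows of $\omega_v$) and along $z$ (by choice of $z$), these two intersections are in the same direction.

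The remaining step is to transfer this witness from $p^{*}$ back to $p$ along a chain of equivalence moves. At each move satisfying the hypothesis of Lemma \ref{lm:zigzag_twice} the intersections of $z$ with the current path are in natural bijection with those of the next path, and when the modified arrow does not appear in the current path this bijection preserves the order of intersections along $z$; outside the modified subpath the bijection is the identity, so the order along the path itself is also preserved there. Combining these two, the pair of same-direction intersections should persist through each such move, ultimately producing a zigzag that intersects $p$ itself more than once in the same direction and contradicting the hypothesis.

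The hard part will be controlling moves that either fail the hypothesis of Lemma \ref{lm:zigzag_twice} (because the current path already contains $a p_{+}(a)$ or $p_{+}(a) a$) or have the modified arrow $a$ actually appearing in the current path (so the order along $z$ need not be preserved). Such moves are themselves intimately linked to the presence of a small cycle around $a$, and I expect to resolve them by substituting the current witness zigzag with a new zigzag emanating from that local small cycle, which carries a fresh same-direction pair across the problematic move. Threading these substitutions together through the entire chain yields the desired zigzag intersecting $p$ more than once in the same direction.
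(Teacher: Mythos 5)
Your opening step (a path that literally contains a small cycle meets some zigzag path twice in the same direction) is fine and parallels the paper's first observation, but the second half of your plan --- transporting that witness backwards along the chain of equivalence moves from $p^{*}$ to $p$ --- is exactly where the content of the corollary lies, and you leave it unresolved. Running the induction in this direction, the hypotheses of Lemma \ref{lm:zigzag_twice} are not available when you need them: the intermediate paths in the chain may contain $a p_+(a)$ or $p_+(a)a$, or may contain the modified arrow $a$ together with $p_{\pm}(a)$ non-adjacently (and they must eventually acquire such configurations, since the chain ends at a path containing a small cycle). In the first case the bijection of intersections does not exist; in the second it exists but the order along $z$ need not be preserved, so ``same direction'' can be lost. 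Your proposed repair --- ``substituting the current witness zigzag with a new zigzag emanating from that local small cycle'' --- is stated as an expectation rather than proved, and as formulated it only addresses the case of a literal small-cycle subpath; it does not cover the case where the path contains $a$ and $p_{\pm}(a)$ as disjoint, non-adjacent pieces, which is precisely the case in which order preservation fails. To make your route work you would need the stronger local statement (which is what the paper actually uses): whenever a path contains an arrow $a$ \emph{and} either $p_+(a)$ or $p_-(a)$, one of the two zigzag paths through the edge $a$ already meets that path twice in the same direction; with this you could rebuild the witness at every problematic step of the backward induction. Without it, there is a genuine gap.

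Note that the paper avoids all of these case distinctions by arguing in the forward direction: starting from the hypothesis on $p$, it first deduces that $p$ cannot contain an arrow $a$ together with $p_+(a)$ or $p_-(a)$; consequently every single substitution move applied to $p$ satisfies the hypotheses of Lemma \ref{lm:zigzag_twice} (including the order-preservation clause, since $a$ is not in $p$), so the property ``no zigzag path meets the path more than once in the same direction'' is inherited by $p'$, and by induction by every path equivalent to $p$. Since that property forbids small cycles, $p$ is minimal. Reorienting your argument this way, with the hypothesis itself as the propagated invariant, removes the failure modes you were trying to patch.
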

\begin{proof}
Assume that there is no zigzag path that intersects $p$
more than once in the same direction on the universal cover.
If $p$ contains an arrow $a$ and
either $p_+(a)$ or $p_-(a)$,
then one of two zig-zag paths
containing the edge corresponding to $a$
intersects $p$ more than once in the same direction
on the universal cover.
It follows that
if $p$ contains $p_+(a)$ or $p_-(a)$
for an arrow $a$, then $p$ does not contain $a$.
Let $p'$ be a path related to $p$ as in Lemma \ref{lm:zigzag_twice}.
Since $p$ does not contain small cycles $ap_+(a)$ or $p_+(a)a$,
Lemma \ref{lm:zigzag_twice} implies that
$p'$ also satisfies the assumption and
hence does not contain a small cycle.
By repeating this argument,
we can see that if a path is equivalent to $p$,
then it does not contain a small cycle.
\end{proof}

The following lemma shows that
the consistency condition implies the first consistency condition
of Mozgovoy and Reineke:

\begin{lmm}\label{lm:consistency_implies_first_consistency}
If weak equivalence does not imply equivalence,
then the dimer model is not consistent.
\end{lmm}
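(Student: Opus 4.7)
The plan is to prove the contrapositive: given weakly equivalent paths $p$ and $q$ with $p \ne q$ in $\bC\Gamma$, produce a violation of one of the three conditions in Definition \ref{df:consistency}. A minor preliminary issue is the existence of perfect matchings needed to invoke Lemma \ref{lm:exists_minimal_path}; if none exists, the dimer model is already degenerate, and one argues directly that a homologically trivial zigzag path must be present, so consistency fails on the first bullet of Definition \ref{df:consistency}.

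First I would apply Lemma \ref{lm:exists_minimal_path} to both $p$ and $q$ to write $p = \ptilde \omega^a$ and $q = \qtilde \omega^b$ with $\ptilde$ and $\qtilde$ minimal (not equivalent to any path containing a small cycle). The weak equivalence $\omega^k p = \omega^k q$ then becomes $\ptilde \omega^{a+k} = \qtilde \omega^{b+k}$ in $\bC\Gamma$, while the failure of equivalence of $p$ and $q$ means $\ptilde \omega^a$ and $\qtilde \omega^b$ cannot be transformed into each other by the local moves $p_+(c) \leftrightarrow p_-(c)$. I would then choose the pair $(p,q)$, and the decomposition, so as to minimize the excess $a+b+k$; this minimality will be the lever in the last step.

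Next I would analyze the sequence of local moves realizing $\ptilde \omega^{a+k} = \qtilde \omega^{b+k}$. Because $\omega$ is central, the $a+k$ copies of the small cycle can be positioned at the source of $\ptilde$ (respectively $\qtilde$), so the sequence of moves sweeps out a region on the universal cover whose boundary involves the lifts of $\ptilde,\qtilde$ together with portions of these small cycles. For each zigzag path $z$, Lemma \ref{lm:zigzag_twice} controls how the multiset of intersections of $z$ with the evolving path changes: the bijection on intersections is preserved as long as neither $cp_+(c)$ nor $p_+(c)c$ arises in the interior. I would use Corollary \ref{cr:minimal_path} in contrapositive form: because the reduction cannot bring $a+b+k$ any lower, some zigzag path $z$ must intersect an intermediate path more than once in the same direction; combined with the equality in $\bC\Gamma$ of the two sides, such a same-direction double intersection must come either from a self-intersection of $z$, from a second zigzag path $z'$ meeting $z$ twice in the same direction on the universal cover, or from a homologically trivial $z$. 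In each case one of the three conditions of Definition \ref{df:consistency} fails.

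The main obstacle will be turning this heuristic into a precise book-keeping argument. The sequence of local moves does not a priori sweep a clean topological disk on the universal cover, and the $k$ extra small cycles may migrate along $\ptilde$ and $\qtilde$ after various shufflings, so the zigzag-intersection bookkeeping needs care. The right strategy is to set up a signed, zigzag-indexed invariant of paths that is simultaneously (i) invariant under the local moves allowed by Lemma \ref{lm:zigzag_twice} and (ii) strictly decreased when a genuinely extractable small cycle is removed, so that minimality of $a+b+k$ forces the presence of one of the three forbidden zigzag configurations. Formalizing this invariant—and verifying the parity condition $u-v\in 2\bZ$ from Definition \ref{df:intersection} at each step—will be the technical heart of the argument.
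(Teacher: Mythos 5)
There is a genuine gap, and it sits exactly where you defer to a ``signed, zigzag-indexed invariant'' that you never construct. The claimed dichotomy in your third paragraph --- that a zigzag path intersecting an intermediate path twice in the same direction ``must come either from a self-intersection of $z$, from a second zigzag path $z'$ meeting $z$ twice in the same direction, or from a homologically trivial $z$'' --- is a non sequitur. In a consistent dimer model there are plenty of non-minimal paths, and by the very criterion of Corollary \ref{cr:minimal_path} such paths are crossed more than once in the same direction by some zigzag path without any of the three conditions of Definition \ref{df:consistency} failing; a same-direction double crossing of a quiver path by a zigzag path carries no contradiction by itself. So the step from ``the reduction of $a+b+k$ is stuck'' to ``one of the three forbidden zigzag configurations exists'' is precisely the content of the lemma, and your proposal does not supply an argument for it. You acknowledge this yourself by calling the missing invariant the technical heart; as written the proposal is a plan, not a proof. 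A secondary problem is the preliminary case: your claim that the absence of a perfect matching forces a homologically trivial zigzag path is asserted without proof and is not obvious (nor is it used anywhere in the paper).

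The paper's proof proceeds quite differently and avoids both difficulties. It normalizes a counterexample to a pair $(a,b)$ of paths on the universal cover meeting only at their endpoints, with either $(a,b\omega^i)$ or $(a\omega^i,b)$ weakly equivalent but not equivalent, and minimizes not a power of $\omega$ but the \emph{area} enclosed by $a$ and $b$, with respect to inclusion. It then takes the zigzag path $z$ leaving along the first arrow of $a$ and shows, using Corollary \ref{cr:minimal_path} applied to the path $c'$ running alongside $z$, that $z$ crossing $a$ would produce a smaller-area counterexample (or contradict the non-equivalence directly); symmetrically the zigzag path along $b$ cannot cross $b$. The two zigzag paths are thus trapped between $a$ and $b$, so on the universal cover they must self-intersect or intersect each other twice in the same direction, violating Definition \ref{df:consistency}. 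No perfect matching and no Lemma \ref{lm:exists_minimal_path} are needed. If you want to salvage your approach, the area-minimization and the trapping of the two boundary zigzag paths are the ideas you would have to import; the $\omega$-power bookkeeping alone does not force a forbidden configuration.
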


\begin{proof}
Assume for contradiction that a consistent dimer model
has a pair of weakly equivalent paths
which are not equivalent.
Then there is a pair $(a, b)$ of paths
on the universaly cover such that
\begin{itemize}
\item There is an integer $i \ge 0$ such that either $(a,b\omega^i)$ or $(a\omega^i, b)$ is weakly equivalent but not equivalent. 
\item If one of $a$ and $b$ contains loops, then it is a loop and the other one is a trivial path.
\item $a$ and $b$ meet only at the endpoints.
\end{itemize}
Choose one of such pairs,
without fixing the endpoints,
so that the area bounded by $a$ and $b$ is minimal
with respect to the inclusion relation.

Figure \ref{fg:consistency1-proof-1} shows
a pair $(a, b)$ of such paths.
We may assume that $a$ is a non-trivial path.
\begin{figure}[htbp]
\centering
\input{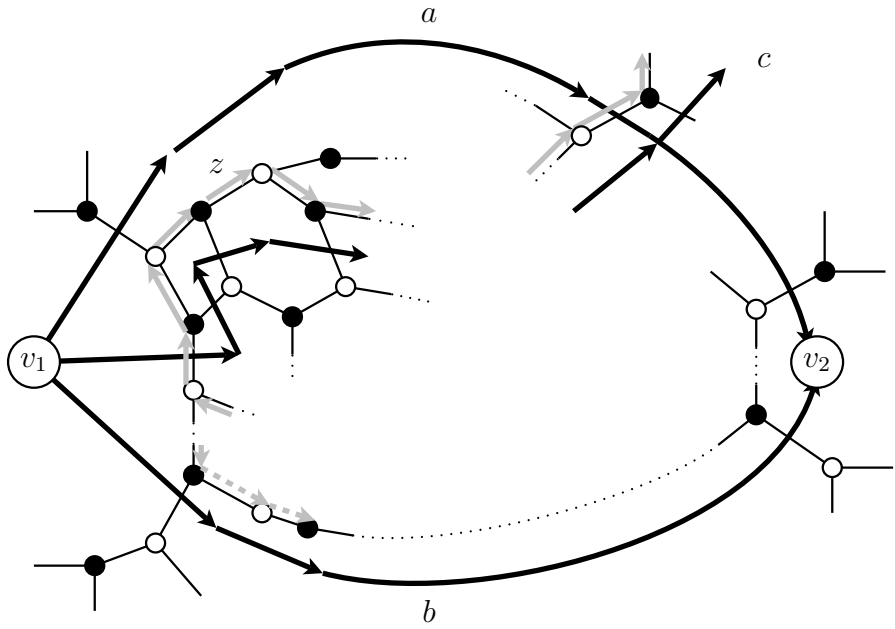}
\caption{A pair of inequivalent paths which are weakly equivalent}
\label{fg:consistency1-proof-1}
\end{figure}
Let $v_1$ and $v_2$ be the source and the target of $a$ respectively.
To show the inconsistency of the dimer model,
consider the zigzag path $z$
which starts from the white node
just on the right of the first arrow in the path $a$
as shown in light gray in Figure \ref{fg:consistency1-proof-1}.

We show that if $z$ crosses $a$,
then it contradicts the minimality of the area.
Assume that $z$ crosses $a$,
and consider the path $c$ which goes along $z$
as in Figure \ref{fg:consistency1-proof-1}.
Since $z$ crosses $a$,
the path $c$ also crosses $a$.
Let $v_3$ be the vertex where $a$ and $c$ intersects,
and $a'$ and $c'$ be the parts of $a$ and $c$
from $v_1$ to $v_3$ respectively.
The part of $a$ from $v_3$ to $v_2$ will be denoted by $d$
as in Figure \ref{fg:consistency1-proof-2}.
\begin{figure}[htbp]
\centering
\input{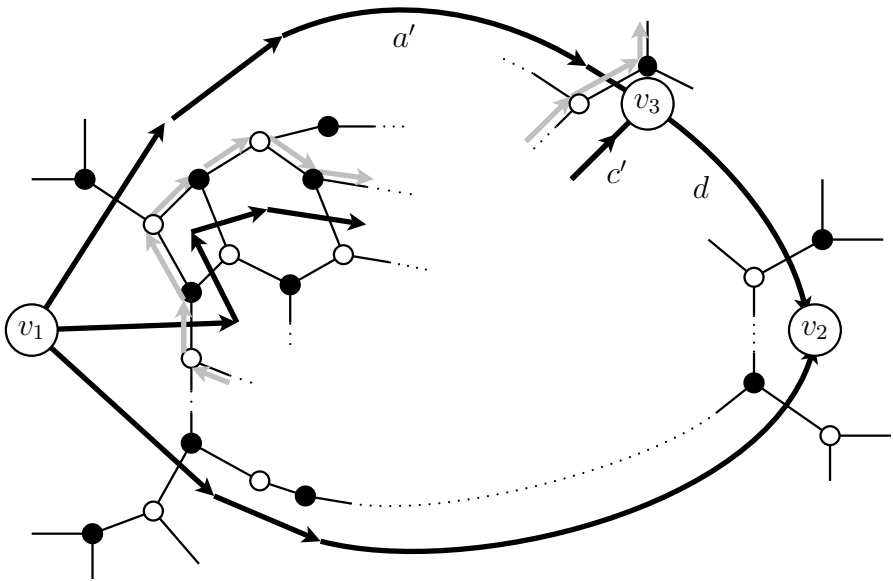}
\caption{The paths $a'$ and $c'$}
\label{fg:consistency1-proof-2}
\end{figure}

If there is a zigzag path $w$ which intersects $c'$
more than once in the same direction,
then $w$ also intersects $z$ more than once in the same direction,
which contradicts the assumption
that the dimer model is consistent.
Hence no zigzag path intersects $c'$
more than once in the same direction,
so that $c'$ is minimal
by Corollary \ref{cr:minimal_path}.

Suppose $d c'$ is different from $b$.
Then by the minimality of the area and the minimality of $c'$,
there are non-negative integers $i$ and $j$
such that $a'$ is equivalent to $c'\omega^i$ and
either $(dc'\omega^j, b)$ or $(dc', b\omega^j)$ are equivalent pairs.
Then one of $(a, b\omega^{i-j})$, $(a\omega^{j-i}, b)$ and
$(a, b\omega^{i+j})$ is an equivalent pair,
which contradicts the assumption.
If $d c'$ coincides with $b$,
then $b$ is equivalent to a path that goes along the opposite side of $z$
as in Figure \ref{fg:consistency1-proof-3},
which contradicts the minimality of the area.
\begin{figure}[htbp]
\centering
\input{consistency1-proof-3.pst}
\caption{A path equivalent to $d c'$}
\label{fg:consistency1-proof-3}
\end{figure}


Hence the zigzag path $z$ cannot cross the path $a$.
In the same way,
the dashed gray zigzag path in Figure \ref{fg:consistency1-proof-1}
cannot cross the path $b$.
It follows that if we extend these two zigzag paths in both directions,
then they will intersect in the same direction more than once
or have a self-intersection.
This contradicts the consistency of the dimer model,
and Lemma \ref{lm:consistency_implies_first_consistency} is proved.

\end{proof}

\begin{lmm}
For a path $p$ in a consistent dimer model,
the following are equivalent:
\begin{enumerate}
 \item \label{it:minimal}
$p$ is minimal.
 \item \label{it:minimum}
$p$ is minimum.
 \item \label{it:zigzag}
There is no zigzag path
that intersects $p$ more than once
in the same direction on the universal cover.
\end{enumerate}
\end{lmm}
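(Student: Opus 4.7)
The plan is to prove the equivalences cyclically,
$(3) \Rightarrow (1) \Rightarrow (2) \Rightarrow (1) \Rightarrow (3)$.
The implication $(3) \Rightarrow (1)$ is exactly Corollary~\ref{cr:minimal_path}.

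For $(1) \Rightarrow (2)$ I would combine three ingredients:
Lemma~\ref{lm:consistency_implies_first_consistency} (so that weak equivalence coincides with equivalence),
the fact noted in the introduction that consistency implies non-degeneracy
(so that Lemma~\ref{lm:exists_minimal_path} applies),
and the unlabeled lemma in Section~\ref{sc:definitions}
characterizing homotopy of paths in terms of weak equivalence up to powers of $\omega$.
Given a minimal $p$ and any $q$ homotopic to $p$,
Lemma~\ref{lm:exists_minimal_path} writes $q \sim q' \omega^j$ with $q'$ minimal and $j \ge 0$;
since $\omega$ is null-homotopic, $q'$ is homotopic to $p$.
The unlabeled lemma combined with first consistency then supplies an exponent $s \ge 0$
with $p \sim q' \omega^s$ or $p \omega^s \sim q'$;
if $s > 0$ one side would contain a small cycle, contradicting the minimality of the other,
so $s = 0$, $p \sim q'$, and $q \sim p \omega^j$, showing that $p$ is minimum.
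The converse $(2) \Rightarrow (1)$ is by contradiction via a perfect matching count.
Fix a perfect matching $D$ (available by non-degeneracy) and let $N_D(p)$
count the number of edges of $D$ appearing in $p$.
One checks that $N_D$ is preserved under equivalence
(the paths $p_+(a)$ and $p_-(a)$ cross $D$ the same number of times, namely $0$ if $a \in D$ and $1$ otherwise),
decreases by exactly one upon removing a small cycle
(as used in the proof of Lemma~\ref{lm:exists_minimal_path}),
and increases by $k$ upon multiplication by $\omega^k$.
If $p$ were minimum but equivalent to some $q$ containing a small cycle,
removing that cycle would give $r$ homotopic to $p$ with $N_D(r) = N_D(p) - 1$,
whereas minimum forces $r \sim p \omega^k$ with $k \ge 0$ and hence $N_D(r) \ge N_D(p)$,
a contradiction.

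The main obstacle is $(1) \Rightarrow (3)$,
which I would attack by contraposition following the area-minimization style of
the proof of Lemma~\ref{lm:consistency_implies_first_consistency}.
Assume some zigzag path $z$ intersects $p$ in the same direction
at distinct arrows $a = a_i$ and $a' = a_{i'}$ with $i < i'$,
chosen to minimize the area bounded on the universal cover
by the subpath $p_0 = a_i \cdots a_{i'}$ and the arc $z_0$ of $z$ from $a$ to $a'$.
Consistency (no self-intersection of $z$, no zigzag pair intersecting in the same direction twice)
guarantees that $z_0$ is simple and $p_0 \cup z_0$ bounds an embedded disk $R$.
The plan is to transport $p_0$ across $z_0$ by successively applying
the relations $p_+(b) = p_-(b)$ at each arrow $b$ of $z_0$,
producing an equivalent path $p_1$ along the opposite side of $z_0$;
pushing the terminal arrows $a$ and $a'$ across $z_0$ generates
a loop around the white or black node adjacent to one of them,
so $p_1$ contains a small cycle and hence $p$ is not minimal.
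The principal technical difficulty, as in the proof of
Lemma~\ref{lm:consistency_implies_first_consistency},
is the case analysis showing that this transport does not produce
new same-direction zigzag intersections
(which would contradict the minimality of the enclosed area)
and that the three consistency conditions together rule out all alternative outcomes.
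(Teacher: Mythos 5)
Your handling of (\ref{it:zigzag})$\Rightarrow$(\ref{it:minimal}), (\ref{it:minimal})$\Rightarrow$(\ref{it:minimum}) and (\ref{it:minimum})$\Rightarrow$(\ref{it:minimal}) is fine: the first is Corollary \ref{cr:minimal_path}, the second is essentially the paper's argument (homotopy plus Lemma \ref{lm:consistency_implies_first_consistency} gives an equivalent pair $(p,q\omega^i)$ or $(p\omega^i,q)$, and minimality kills the exponent on the wrong side), and your perfect-matching count for the third is a correct way of making precise what the paper dismisses as clear. The genuine gap is in (\ref{it:minimal})$\Rightarrow$(\ref{it:zigzag}), which you yourself label the ``main obstacle'' and leave unresolved. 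The proposed transport of the subpath $p_0$ across the arc $z_0$ ``by successively applying the relations $p_+(b)=p_-(b)$ at each arrow $b$ of $z_0$'' is not a valid mechanism: a relation at $b$ can only be applied if the current path contains $p_+(b)$ or $p_-(b)$ as a consecutive subpath, and there is no reason $p_0$ (which may run far from $z_0$ inside the enclosed region) does so. Worse, the assertion that the deformation ends in a path containing a small cycle is exactly the content of the implication to be proved; Lemma \ref{lm:zigzag_twice} says precisely that a path cannot be moved across a zigzag path without small cycles appearing, so your sketch presupposes its own conclusion, and the announced case analysis ruling out ``all alternative outcomes'' is never carried out. As it stands the implication (\ref{it:minimal})$\Rightarrow$(\ref{it:zigzag}) is not established.

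The paper closes this implication without any new area-minimization, by recycling what is already proved. Suppose a zigzag path $z$ meets the minimal path $p$ twice in the same direction; pick intersection arrows $a_1$, $a_2$ so that the portions of $z$ and $p$ between them do not meet, let $p'$ be the subpath of $p$ from $s(a_1)$ to $t(a_2)$, and let $q$ be the path from $s(a_1)$ to $t(a_2)$ running parallel to $z$. No zigzag path can meet $q$ twice in the same direction (consistency), so $q$ is minimal by Corollary \ref{cr:minimal_path}, hence minimum by the already-proved implication (\ref{it:minimal})$\Rightarrow$(\ref{it:minimum}); therefore $p'$ is equivalent to $q\omega^i$ for some $i\ge 0$, and minimality of $p'$ forces $i=0$. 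But $p'\sim q$ is impossible, since $q$ does not intersect $z$ while $p'$ does, contradicting Lemma \ref{lm:zigzag_twice} (equivalently Corollary \ref{cr:zigzag-loop}). Replacing your transport argument by this use of the minimum property of the parallel path is what your proposal is missing.
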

\begin{proof}
It is clear that \ref{it:minimum} implies \ref{it:minimal}.
To show the converse,
take a minimal path $p$ and
a path $q$ from $s(p)$ to $t(p)$ homotopic to $p$.
Then $(p, q \omega^i)$ or $(p \omega^i, q)$ is weakly equivalent,
hence equivalent.
By the minimality of $p$,
$p\omega^i$ is equivalent to $q$,
which means $p$ is minimum.

Corollary \ref{cr:minimal_path} states that
\ref{it:zigzag} implies \ref{it:minimal}.
To show the converse,
suppose there is a zigzag path $z$ as above.
Let $a_1$ and $a_2$ be arrows on the intersection of $z$ and $p$
such that the directions are from $a_1$ to $a_2$ on both $z$ and $p$,
and their parts between $a_1$ and $a_2$ do not meet each other.
Let $p'$ be the part of $p$ from $s(a_1)$ to $t(a_2)$.
There is a path $q$ from $s(a_1)$ to $t(a_2)$ which is parallel to $z$.
Since $q$ is minimal by Corollary \ref{cr:minimal_path},
it is minimum and
there is an integer $i \ge 0$ such that
$p'$ is equivalent to $q\omega^i$.
If $p'$ is also minimal, $i$ must be zero and therefore $p'$ is equivalent to $q$. 
This contradicts Lemma \ref{lm:zigzag_twice}
and thus $p$ is not minimal.
\end{proof}

The following lemmas show that
the first consistency condition of Mozgovoy and Reineke
together with the existence of a perfect matching
implies the consistency condition:

\begin{lmm} \label{lm:consistency2-proof-1}
Assume that a dimer model has a perfect matching
and a pair of zigzag paths
intersecting in the same direction twice on the universal cover,
none of which has a self-intersection.
Then there is a pair of inequivalent paths
which are weakly equivalent.
\end{lmm}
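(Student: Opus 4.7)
The plan is to mimic the example in Figure~\ref{fg:inconsistent_path1} and construct, on the universal cover, a pair of quiver paths running along opposite sides of the bigon carved out by the two zigzag paths.

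First, I lift to the universal cover and fix two consecutive same-direction intersections $P_1, P_2$ of $z_1$ and $z_2$. Since neither zigzag has a self-intersection, the portions $\alpha_1 \subset z_1$ and $\alpha_2 \subset z_2$ between $P_1$ and $P_2$ are embedded, and together they bound a topological disk $\Delta$. Reading off the arrows of the quiver adjacent to $\alpha_1$ and $\alpha_2$ on the inside of $\Delta$ in the natural order gives two quiver paths $p$ and $q$. The same-direction hypothesis at both $P_1$ and $P_2$ is precisely what ensures that $p$ and $q$ share a common source vertex $u$ and a common target vertex $v$: at each same-direction crossing, the faces adjacent to the intersection edge sit in a canonical arrangement that makes the start and end vertices of the two parallel paths coincide (for an opposite-direction crossing, the endpoints would instead be swapped, as in Figure~\ref{fg:P1P1_II_zigzag}).

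Since $p$ and $q$ together bound $\Delta$, they are homotopic as paths in the torus from $u$ to $v$. Applying the lemma at the start of Section~\ref{sc:definitions} gives integers $i, j \ge 0$ with $p\omega^i = q\omega^j$ in $\bC\Gamma$. Consequently, one of the pairs $(p, q\omega^{j-i})$ or $(p\omega^{i-j}, q)$, depending on which of $i, j$ is larger, is weakly equivalent in the sense of Definition~\ref{df:MR}.

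The main obstacle is to show that the resulting pair is in fact not equivalent in $\bC\Gamma$. The plan is to first use Corollary~\ref{cr:minimal_path} to establish that $p$ and $q$ are both minimal: because they lie strictly inside $\Delta$ and the boundary of $\Delta$ is made of two zigzag arcs, any zigzag path other than $z_1, z_2$ can cross the side of $\partial \Delta$ along which $p$ (respectively $q$) runs at most in a controlled way, so that $p$ and $q$ each fail to be crossed more than once in the same direction by any zigzag. Granted minimality, the existence of the perfect matching and Lemma~\ref{lm:exists_minimal_path} allow reduction to normal form, and one argues that two distinct minimal paths sitting on opposite sides of the disk cannot represent the same element of $\bC\Gamma$: any purported equivalence would have to be realized by a sequence of local moves $p_+(a) \leftrightarrow p_-(a)$ sweeping across the interior of $\Delta$, and the most delicate point is to show that no such sequence exists without introducing a small cycle, contradicting minimality. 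This last geometric/combinatorial step on the moves inside $\Delta$ is the heart of the argument.
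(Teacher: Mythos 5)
Your setup (the bigon $\Delta$ bounded by the two zigzag arcs, two quiver paths along its two sides, homotopy giving weak equivalence up to powers of $\omega$) matches the paper's construction of the paths $a$ and $b$ from $v_1$ to $v_2$, but the two steps you flag as the substance of the proof are exactly where the proposal breaks down. First, the paths $p$ and $q$ running along the boundary of $\Delta$ are \emph{not} minimal in general, and your appeal to Corollary~\ref{cr:minimal_path} does not work: the dimer model here is precisely one that fails the consistency condition, so you have no control whatsoever over how the \emph{other} zigzag paths cross $\Delta$ --- they may cross $p$ or $q$ many times in the same direction, and nothing prevents $p$ or $q$ from containing (paths equivalent to) small cycles. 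This is why the paper does not claim minimality of $a$ and $b$ but instead runs a constrained reduction: repeatedly deform the path without letting it cross the other zigzag path (this is where Lemma~\ref{lm:zigzag_twice} enters) and factor out small cycles, with the perfect matching used to guarantee termination, producing a minimal $a'$ that still does not intersect $w$ (and similarly $b'$ not intersecting $z$). In your write-up the perfect matching appears only after minimality is ``granted'', at which point it would have nothing left to do --- a sign that this step is not actually in place.

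Second, and more seriously, the non-equivalence of the resulting pair --- which you yourself call ``the heart of the argument'' --- is not proved but only described as a delicate sweep-across-the-disk analysis that you do not carry out. The paper closes this gap with a concrete invariant: by Corollary~\ref{cr:zigzag-loop}, a minimal path that does not intersect the zigzag path $w$ can never be equivalent to a path that does intersect $w$. Since $a'$ does not intersect $w$ while $b'$ must intersect $w$ (and $a'$ must intersect $z$) for topological reasons, and since minimality forces the power of $\omega$ in any purported equivalence to vanish, the pair $(a',b'\omega^i)$ or $(a'\omega^i,b')$ is weakly equivalent but not equivalent. Without an invariant of this kind your proposed case analysis of local moves inside $\Delta$ is exactly the unproved statement, so as it stands the proof is incomplete at its decisive step.
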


\begin{proof}
For a pair $(z, w)$ of zigzag paths
intersecting in the same direction twice
on the universal cover,
consider the pair $(a, b)$ of paths as shown in dark gray
in Figure \ref{fg:consistency2-proof-1}.
\begin{figure}[htbp]
\centering
\input{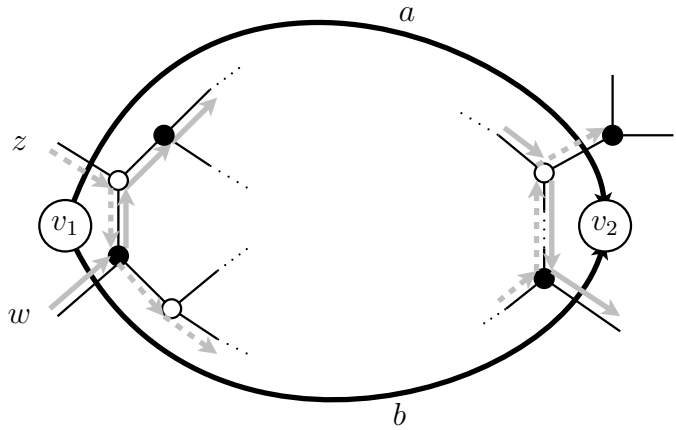}
\caption{A pair of inequivalent paths which are weakly equivalent}
\label{fg:consistency2-proof-1}
\end{figure}
Our assumption that $w$ does not have a self-intersection
implies that $a$ does not intersect $w$.
We claim that there is a minimal path $a'$
which does not intersect $w$
such that $a = a' \omega^k$ for some $k \in \bN$.
The existence of such $a'$ and $k$
follows from Corollary \ref{cr:zigzag-loop}
and the existence of a perfect matching:
A perfect matching intersects $a$ in a finite number of points,
and the number of intersection decreases by one
as one factors out a small cycle.
Hence the process of
\begin{itemize}
 \item
deforming the path without letting it
intersect $w$ (Lemma \ref{lm:zigzag_twice}), and
 \item
factoring out a small cycle if any
\end{itemize}
must terminate in finite steps.
Moreover,
the resulting path $a'$ cannot be equivalent to a path
intersecting $w$ by Corollary \ref{cr:zigzag-loop}.
Similarly, there is a minimal path $b'$ from $v_1$ to $v_2$
which does not intersect $z$.
On the other hand,
$a'$ and $b'$ intersect $z$ and $w$ respectively
for topological reason.
It follows that
$(a', b' \omega^i)$ or $(a' \omega^i, b')$
for some non-negative integer $i$
gives a pair of weakly equivalent paths
which are not equivalent.
\end{proof}

\begin{lmm} \label{lm:zigzag-loop}
Assume that a dimer model has a perfect matching
and a zigzag path with a self-intersection
on the universal cover.
Then there is a pair of inequivalent paths
which are weakly equivalent.
\end{lmm}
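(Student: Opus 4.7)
The plan is to mirror the proof of Lemma \ref{lm:consistency2-proof-1}, with the two branches of the self-intersecting zigzag path $z$ playing the roles of the two distinct zigzag paths used there.

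First I would lift $z$ to the universal cover and fix a self-intersection: by definition this gives indices $i \ne j$ with $e_i = e_j$, opposite directions of $z$ at these positions, and $u - v \in 2\bZ$. Let $z_1$ denote the arc of the lifted zigzag path parameterized by the interval between these two occurrences and $z_2$ the complementary arc; together they form a closed loop in the universal cover that bounds a region, meeting transversely at $e$. The parity and opposite-direction conditions are precisely what guarantees the crossing is genuine.

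Next, taking $v_1$ and $v_2$ to be the two endpoints of the edge $e$, I would construct two quiver paths $a$ and $b$ from $v_1$ to $v_2$, each running parallel to one of the arcs $z_1, z_2$ on the side facing the interior of the bounded region. By construction, $a$ does not intersect $z_2$ and $b$ does not intersect $z_1$. Using the perfect matching hypothesis together with Lemma \ref{lm:exists_minimal_path} and Corollary \ref{cr:zigzag-loop}, I would then replace $a$ and $b$ by minimal paths $a'$ and $b'$ with $a = a' \omega^{k_1}$ and $b = b' \omega^{k_2}$; Corollary \ref{cr:zigzag-loop} ensures that no path equivalent to $a'$ crosses $z_2$ and no path equivalent to $b'$ crosses $z_1$.

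Since the two arcs $z_1, z_2$ together enclose a region, any quiver path from $v_1$ to $v_2$ homotopic to $a$ must cross at least one of $z_1, z_2$. Combined with the previous observation, this forces $a'$ to cross $z_1$ and $b'$ to cross $z_2$, so $a'$ and $b'$ are inequivalent. On the other hand $a'$ and $b'$ are homotopic in the universal cover, and hence, by the lemma in Section \ref{sc:definitions} characterizing homotopy in terms of weak equivalence, some pair $(a', b' \omega^i)$ or $(a' \omega^i, b')$ is weakly equivalent, producing the desired inequivalent but weakly equivalent pair.

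The main obstacle will be making the opening step precise, in particular handling divalent nodes near the self-crossing and verifying that the combinatorial definition of self-intersection really yields a transversely embedded loop bounding a topological disk on the universal cover. Once the arcs $z_1, z_2$ and the two parallel quiver paths $a, b$ are set up correctly, the remainder is a direct transcription of the argument of Lemma \ref{lm:consistency2-proof-1}.
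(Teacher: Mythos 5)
Your reduction to the argument of Lemma \ref{lm:consistency2-proof-1} does not go through, and it breaks precisely at the two points where the paper's proof has to deviate from that lemma. First, the geometric setup is wrong: a self-intersection of a single zigzag path $z$ at an edge $e_0$ produces \emph{one} closed loop on the universal cover, namely the arc of $z$ between its two passes through $e_0$ (the loop $e_0 e_1 \dots e_n e_0$, with $C = e_1 \cup \dots \cup e_n$); the ``complementary arc'' consists of two infinite rays, so there is no second arc joining the endpoints $v_1, v_2$ of $e_0$, and no finite quiver path from $v_1$ to $v_2$ ``parallel to $z_2$''. The correct analogue of the second path is the arrow $e_0$ itself, and one must then prove separately that this length-one path is minimal and inequivalent to the long path going around $C$ (this is where the divalent-node discussion enters), none of which appears in your outline.

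Second, and more seriously, your appeal to Corollary \ref{cr:zigzag-loop} is not legitimate here: that corollary and Lemma \ref{lm:zigzag_twice} are statements about intersection with an \emph{entire} zigzag path, whereas your $z_1$ and $z_2$ are arcs of the same zigzag path, and the loop $C$ may intersect $z \setminus C$. This is exactly the obstruction the paper flags when it says Lemma \ref{lm:zigzag_twice} cannot be applied directly. Consequently you cannot conclude that no path equivalent to $a'$ crosses $z_2$, nor that $a'$ and $b'$ are inequivalent merely because they cross different arcs. The paper replaces this step by a direct argument: a small-cycle-free path can only be deformed across $C$ at the arrow $e_0$, so such a deformation exhibits the path as $c\, p_{\pm}(e_0)\, d$, and a counting argument against a perfect matching $D$ (comparing $|c \cap D|$ with $|b_1 \cap D|$) either terminates in a minimal path $b'$ not homotopic to $e_0$ in $\bR^2 \setminus C$, or else produces a homologically trivial cyclic path meeting $D$ in too few edges to be equivalent to any positive power of a small cycle. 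This second outcome also yields the required weakly equivalent but inequivalent pair, and it is a case your proposal never encounters but which cannot be excluded; without handling it, and without a substitute for the misapplied Corollary \ref{cr:zigzag-loop}, the proof is incomplete.
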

\begin{proof}
Let $z$ be a zigzag path on the universal cover
with a self-intersection and
$e_0 e_1 e_2 \dots e_n e_0$ be a loop in $z$,
where $z$ has a self-intersection at $e_0$
and does not have any self-intersection
in $(e_1, \ldots, e_n)$.
The union of the edges $e_1, \dots, e_n$ will be denoted by $C$.
\begin{figure}[htbp]
\begin{minipage}{.45 \linewidth}
\centering
\input{consistency3-proof-1.pst}
\caption{A pair of inequivalent paths which are weakly equivalent}
\label{fg:consistency3-proof-1}
\end{minipage}
\begin{minipage}{.45 \linewidth}
\centering
\input{inconsistent-2-path.pst}
\caption{Homologically trivial zigzag path
and a cyclic path on the quiver}
\label{fg:inconsistent-2-path}
\end{minipage}

\end{figure}

Regarding $e_0$ as an arrow,
we put $v_1=s(e_0)$ and $v_2=t(e_0)$.
There is a path $b$ from $v_1$ to $v_2$ which goes along $z$.
The edge $e_0$ as an arrow of the quiver
also forms a path from $v_1$ to $v_2$.
We show that the path $e_0$ is minimal, and
\begin{itemize}
 \item
there is a minimal path $b'$ from $v_1$ to $v_2$
which is not equivalent to $e_0$, or
 \item
there is a non-trivial cyclic path
which is not equivalent to any positive power of a small cycle.
\end{itemize}
In the latter case,
since we are working on the universal cover,
this cyclic path is homologically trivial, and
the pair consisting of this cyclic path
and a suitable power of a small cycle
gives a pair of inequivalent paths
which are weakly equivalent.
In the former case,
there is a non-negative integer $i$ such that
either $(e_0, b' \omega^i)$ or $(e_0 \omega^i, b')$
is a pair of weakly equivalent paths,
since both $e_0$ and $b'$ are paths from $v_1$ to $v_2$
on the universal cover, and hence homotopic.
This pair of paths cannot be equivalent
since $e_0$ and $b'$ are minimal.

To obtain a minimal path from $b$,
we first remove as many small cycles from $b$ as possible
without making it intersect $C$.
This process terminates in finite steps
just as in the proof of Lemma \ref{lm:consistency2-proof-1}.
The resulting path $b_1$ may not be minimal yet
since it might allow a deformation
first to a path intersecting $C$
and then to a path containing small cycles.
Assume that another path $b_1'$ from $v_1$ to $v_2$
intersecting $C$
is obtained from $b_1$
by replacing $p_{-}(a) \subset b_1$ with $p_+(a)$
(or the other way around, depending
on the color of the node at $e_0 \cap e_1$)
for a single arrow $a$.
Since $C$ is a part of a zigzag path,
it follows, from the definitions of a zigzag path
and the equivalence of paths
just as in Lemma \ref{lm:zigzag_twice},
that the arrow $a$ must be $e_0$.
(Lemma \ref{lm:zigzag_twice} roughly
states that one needs a small cycle
to deform a path across a zigzag path.
Since $b_1$ does not contain a small cycle,
the only way to deform it across $C$ is to
deform it by the equivalence relation at $e_0$.
Unfortunately, one cannot apply Lemma \ref{lm:zigzag_twice}
directly in the present situation
since $C$ may intersect $z \setminus C$.)

Thus $b_1$ contains $p_{-}(e_0)$ (or $p_{+}(e_0)$)
and is written as $b_1 = c p_{-}(e_0) d$
(or $b_1 = c p_{+}(e_0) d$),
where $c$ and $d$ are paths from $v_1$ to $v_2$.
At least one of them (say, $c$) is not homotopic to
the arrow $e_0$
in $\bR^2 \setminus C$.
Take a perfect matching $D$ and count the number
$|c \cap D|$ of edges of $D$
which meet the path $c$.

If the number $|c \cap D|$ is equal to $|b_1 \cap D|$,
then $p_{-}(e_0) d$ is a non-trivial cyclic path on the quiver
which does not meet $D$ at all.
Note that for any given perfect matching,
equivalent paths have
the same numbers of arrows meeting that perfect matching.
Since a small cycle meet any perfect matching
at exactly one edge,
the cyclic path $p_{-}(e_0) d$ cannot be equivalent
to any positive power of a small cycle.

If the number $|c \cap D|$ is smaller than $|b_1 \cap D|$,
then we set $b_2 = c$ and repeat this process.
After finitely many steps,
we obtain a path $b' = b_n$ such that
\begin{itemize}
 \item
$b'$ is not homotopic to $e_0$ in $\bR^2 \setminus C$, and
 \item
$b'$ is not equivalent to a path 
containing a small cycle,
so that $b'$ is minimal,
\end{itemize}
or a cyclic path which is not equivalent
to any positive power of a small cycle.

To show that the path $e_0$ is minimal,
note that
the arrow $e_0$ can be equivalent to another path
only if the edge $e_0$ is the first
of several consecutive edges
connected by divalent nodes.
Since $z$ has a self-intersection at $e_0$,
the number of consecutive edges connected
by divalent nodes must be odd and
$e_0$ can be equivalent only to arrows.
This shows that the path $e_0$ is minimal.

It is clear that $b'$ is a path of length
greater than one.
This shows that $e_0$ is not equivalent to $b'$,
and Lemma \ref{lm:zigzag-loop} is proved.
\end{proof}

The following lemma can be shown in an analogous way:

\begin{lmm}
Assume that a dimer model has a perfect matching
and a zigzag path with the trivial homology class,
then there is a cyclic path on the quiver
which is weakly equivalent to some power of a small cycle
but not equivalent.
\end{lmm}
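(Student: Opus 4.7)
The proof is closely analogous to Lemma \ref{lm:zigzag-loop}. Let $z$ be a homologically trivial zigzag path, so that $i \mapsto e_i$ is periodic with some period $n$. Since $\pi_1(T) = \bZ^2$ is abelian, a homologically trivial loop on $T$ is also homotopically trivial, and therefore the lift of $z$ to the universal cover $\bR^2$ is a closed curve of length $n$, bounding a disk region $R$ by the Jordan curve theorem. Pick any edge $e_0$ of the lifted $z$ and regard it as an arrow of the quiver from $v_1 = s(e_0)$ to $v_2 = t(e_0)$, and set $C = e_1 \cup \cdots \cup e_{n-1}$. This sets up the same geometric picture as in Lemma \ref{lm:zigzag-loop}, with the self-intersection sub-loop now replaced by the whole periodic closure of $z$.

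Construct a path $b$ from $v_1$ to $v_2$ that runs parallel to $C$ on one side, and reduce it to a path $b_1$ by factoring out small cycles without letting the path cross $C$; this terminates by the perfect-matching count used in the proof of Lemma \ref{lm:zigzag-loop}. Now repeat the case analysis: if $b_1$ can be further deformed across $C$, the absence of small cycles in $b_1$ together with the fact that $C$ is (part of) a zigzag path forces the deformation to happen at the single edge $e_0$, exactly as in the proof of Lemma \ref{lm:zigzag-loop}. Write $b_1 = c \, p_{-}(e_0) \, d$ (or $c \, p_{+}(e_0) \, d$) for paths $c,d$ from $v_1$ to $v_2$. When $|c \cap D| = |b_1 \cap D|$, the factor $p_{-}(e_0) \, d$ is a cyclic path at $v_2$ not meeting the perfect matching $D$; in the opposite case $|c \cap D| < |b_1 \cap D|$, one iterates on $c$ until such a cyclic factor appears.

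The resulting cyclic path $p = p_{-}(e_0) \, d$ at $v_2$ then has the desired properties. Since every small cycle meets $D$ in exactly one edge and equivalence preserves $|\cdot \cap D|$, the path $p$ cannot be equivalent to any positive power of $\omega_{v_2}$. On the other hand, $p$ is a cyclic path on the simply connected universal cover, hence contractible; by the lemma relating homotopy and weak equivalence it is weakly equivalent to $1_{v_2} = \omega_{v_2}^0$. Because $p$ has positive length and equivalence (which locally swaps $p_{\pm}(a)$, each of length $\ge 1$) never collapses a path to the trivial one, $p$ is not equivalent to $1_{v_2}$ either. Alternatively, the path $p \cdot \omega_{v_2}$ is weakly equivalent to $\omega_{v_2}$ but not equivalent to it, supplying a cyclic path in the non-trivial power case.

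\textbf{Main obstacle.} The delicate technical point, exactly as in Lemma \ref{lm:zigzag-loop}, is justifying that once small cycles have been removed from $b_1$, any equivalence deformation pushing it across $C$ must be the one interchanging $p_{+}(e_0)$ and $p_{-}(e_0)$ at the distinguished edge $e_0$. Here the situation is slightly more subtle than before: since $C$ is a sub-arc of the \emph{entire} closed zigzag $z$ (which may encounter itself through the periodic identification on the torus), one cannot directly invoke Lemma \ref{lm:zigzag_twice}, and must argue locally—just as in the parenthetical remark in the proof of Lemma \ref{lm:zigzag-loop}—that crossing $C$ demands a small cycle in the path being deformed unless the deformation is performed precisely at $e_0$. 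Once this is in hand, the remainder of the argument proceeds exactly as sketched above.
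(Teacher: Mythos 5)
There is a genuine gap, and it is at the very first step of your construction. In Lemma \ref{lm:zigzag-loop} the edge $e_0$ is not an arbitrary edge: it is the self-intersection edge, traversed twice by $z$ in opposite directions, so that $C=e_1\cup\dots\cup e_n$ is a closed loop attached to one endpoint of $e_0$ and the two faces $v_1=s(e_0)$, $v_2=t(e_0)$ lie on the \emph{same} side of $C$. That is exactly what makes a path $b$ from $v_1$ to $v_2$ going along $z$ without meeting $C$ possible, and what makes $e_0$ the unique place where a crossing substitution can occur without a small cycle. In your situation the (assumed simple) closed curve is $e_0\cup C$ itself, $e_0$ is an ordinary edge of it, and the faces $v_1$ and $v_2$ lie on \emph{opposite} sides of this curve, one inside the disk and one outside. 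Hence every quiver path from $v_1$ to $v_2$ must cross the zigzag loop, and the path $b$ ``running parallel to $C$ on one side'' that your argument starts from does not exist; moreover nothing singles out your $e_0$, so the pivotal claim that a deformation of $b_1$ across $C$ must occur at $e_0$ ``exactly as in Lemma \ref{lm:zigzag-loop}'' has lost the very feature (the double passage of $z$ through $e_0$) that justified it there. Your geometric picture is therefore not ``the same as in Lemma \ref{lm:zigzag-loop}'', and the case analysis built on it does not get off the ground. Two smaller points: your closing assertion that $p\,\omega_{v_2}$ is not equivalent to $\omega_{v_2}$ is unsupported (from $p\,\omega^i=\omega^i$ one cannot cancel $\omega$, and the matching count does not separate $p\,\omega$ from $\omega$ since $|p\cap D|=0$); and your use of the Jordan curve theorem silently assumes the homologically trivial zigzag path has no self-intersection, a case you would have to dispose of separately (e.g.\ via Lemma \ref{lm:zigzag-loop}).

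The intended argument, which your plan should be rewritten around, uses a \emph{cyclic} quiver path that encircles the zigzag loop while staying on one side of it (as in Figure \ref{fg:inconsistent-2-path}), rather than a path between the two faces adjacent to a chosen edge. Such a cyclic path is null-homotopic on the universal cover, hence by the homotopy lemma of Section \ref{sc:definitions} it is weakly equivalent to $\omega^k$ with $k$ equal to its intersection number with the perfect matching. One then factors out small cycles without letting the path meet the loop, the process terminating by the matching count as in Lemma \ref{lm:zigzag-loop}; and here the situation is in fact simpler than in that lemma, because there is no distinguished edge at which a crossing could sneak through, so a Lemma \ref{lm:zigzag_twice}-type argument applies along the whole loop and the reduced path can never be deformed across it. The reduced cyclic path is therefore minimal but still winds once around the loop, so it cannot be equivalent to any power of a small cycle, while remaining weakly equivalent to one; this is the pair the lemma asks for.
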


Indeed, consider the path which goes around the zigzag path,
and factor out all the possible small cycles.
Then one ends up with
a path weakly equivalent to a power of a small cycle
but not equivalent to it.

For example,
the path on the quiver shown in Figure \ref{fg:inconsistent-2-path}
is weakly equivalent to a small cycle
as shown in Figure \ref{fg:inconsistent-2-loop},
although it is not equivalent;
if we call the idempotent element
in the path algebra
corresponding to the top-left vertex
and the path shown in Figure \ref{fg:inconsistent-2-path}
starting from the top-left vertex
as $e$ and $p$ respectively,
then one has
$
 p \ne e \omega
$
and
$
 p \omega = e \omega^2.
$

\begin{figure}[htbp]
\centering
\input{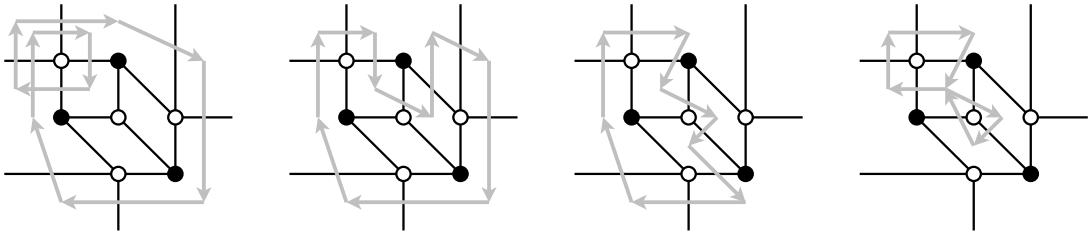}
\caption{Deforming a path on the quiver}
\label{fg:inconsistent-2-loop}
\end{figure}

%
%

\section{Properly-ordered dimer models}
 \label{sc:properly-ordered}

For a node in a dimer model,
the set of zigzag paths going through the edges adjacent to it
has a natural cyclic ordering
given by the directions of the outgoing paths
from the node.
On the other hand,
the homology classes of these zigzag paths determine
another cyclic ordering
if these classes are distinct.

\begin{dfn}[{Gulotta \cite[section 3.1]{Gulotta}}]
 \label{df:properly-ordered}
A dimer model is {\em properly ordered} if
\begin{enumerate}
\item
there is no homologically trivial zigzag path,
\item
no zigzag path has a self-intersection on the universal cover,
 \item
no pair of zigzag paths in the same homology class have a common node, and
 \item
for any node of the dimer model,
the cyclic order on the set of zigzag paths
going through that node
coincides with the cyclic order
determined by their homology classes.
\end{enumerate}
\end{dfn}
Here,
the homology group of the torus $T = \bR^2 / \bZ^2$
is identified with $\bZ^2$ in a natural way.
The {\em slope} of a zigzag path is
$$
 \frac{(u, v)}{\sqrt{u^2+v^2}} \in S^1,
$$
where $(u, v) \in \bZ^2$ is the homology class
of the zigzag path.
The lack of a self-intersection
implies that $(u, v)$ is a primitive element,
so that a set of zigzag paths with distinct homology classes
has a well-defined counter-clockwise cyclic order.

A consistent dimer model is properly ordered:

\begin{lmm} \label{lm:c-p}
In a consistent dimer model,
the cyclic order of the zigzag paths
around any node of the dimer model
is compatible with the cyclic order
determined by their slopes.
\end{lmm}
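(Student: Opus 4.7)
The plan is to argue by contradiction: assume that at some node $n$ the geometric cyclic order of zigzag paths through $n$ does not match the cyclic order of their slopes, and produce two zigzag paths whose lifts to the universal cover violate the third clause of Definition \ref{df:consistency}. The first step is to reduce to a consecutive triple. Two cyclic orders on the same finite set coincide globally iff they induce the same order on every geometrically consecutive triple (by an easy induction along the cycle). Hence there are three zigzag paths $z_1, z_2, z_3$ through $n$, geometrically consecutive there, such that $\alpha_2$ does \emph{not} lie between $\alpha_1$ and $\alpha_3$ in the cyclic slope order; by the zigzag rule, $z_1$ and $z_2$ share an edge $e_1$ at $n$ and $z_2, z_3$ share an edge $e_2$ at $n$.

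Next I would lift to the universal cover: fix a lift $\tilde n$ of $n$ and let $\tilde{z}_i$ be the lift of $z_i$ through $\tilde n$. Since consistency forbids homologically trivial zigzag paths and self-intersections (the first two clauses of Definition \ref{df:consistency}), each $\tilde{z}_i$ is a simple bi-infinite curve in $\bR^2$ with asymptotic directions $\pm\alpha_i$, and $\tilde{z}_1, \tilde{z}_2$ already intersect at the lift $a$ of $e_1$. The outgoing rays of the three $\tilde{z}_i$ at $\tilde n$ are arranged in the geometric cyclic order, so the outgoing ray of $\tilde{z}_2$ enters the open wedge $W$ at $\tilde n$ bounded by the outgoing rays of $\tilde{z}_1$ and $\tilde{z}_3$. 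Because $\alpha_2$ lies \emph{outside} the cone at infinity spanned by $\alpha_1$ and $\alpha_3$, the path $\tilde{z}_2$ cannot remain inside $W$ and must exit by crossing $\tilde{z}_1$ or $\tilde{z}_3$ at some point $p \neq \tilde n$; without loss of generality it crosses $\tilde{z}_1$, so $\tilde{z}_1$ and $\tilde{z}_2$ have a second intersection.

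It remains to check that the two intersections $a$ and $p$ are in the same direction. On $\tilde{z}_2$, the edge $e_1$ is outgoing at $\tilde n$, so $a$ is just downstream of $\tilde n$ on $\tilde{z}_2$ and $p$ lies further downstream, hence $\tilde{z}_2$ points from $a$ to $p$. On $\tilde{z}_1$, the edge $e_1$ is incoming at $\tilde n$, so $a$ is just upstream of $\tilde n$, while $p$ lies on the outgoing ray of $\tilde{z}_1$ from $\tilde n$ and is therefore downstream of $a$; hence $\tilde{z}_1$ also points from $a$ to $p$. This is the forbidden ``same direction'' configuration, contradicting consistency. The main obstacle is this orientation bookkeeping---verifying that the shared edge $a$ lies on opposite sides of $\tilde n$ along $\tilde{z}_1$ and $\tilde{z}_2$ so that $p$ ends up downstream of $a$ along both---while the existence of $p$ itself is a soft topological consequence of the slope mismatch.
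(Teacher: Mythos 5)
Your overall strategy is the paper's: the published proof is just the two-sentence assertion that one can choose a triple $z_1,z_2,z_3$ of zigzag paths through neighbouring edges at the node whose cyclic order disagrees with the slope order, and that two of them must then intersect more than once in the same direction on the universal cover. Your wedge argument and the upstream/downstream bookkeeping supply exactly the details the paper omits, and that part is essentially sound: a transversal crossing of two zigzag paths can only occur along a shared edge (two zigzag paths meeting at a node without a common edge use two disjoint pairs of cyclically adjacent edges, which cannot interleave, so they touch rather than cross there), so your exit point $p$ really is an intersection in the sense of Definition \ref{df:intersection}; and in either case ($p$ on the forward part of $\tilde{z}_1$ or of $\tilde{z}_3$) both paths point from the shared edge at the node to $p$, giving the configuration forbidden by the third clause of Definition \ref{df:consistency}.

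The genuine problem is your reduction step. It is false that two cyclic orders on a finite set coincide as soon as they agree on every geometrically consecutive triple: for five paths in geometric order $(z_1,z_2,z_3,z_4,z_5)$, the ``pentagram'' order $(z_1,z_4,z_2,z_5,z_3)$ agrees with the geometric order on all five consecutive triples $(z_i,z_{i+1},z_{i+2})$ and yet is a different cyclic order. So the contrapositive you need --- if the two orders differ at a node, then some \emph{consecutive} triple is discrepant --- is not delivered by your ``easy induction along the cycle'', and as written your argument establishes only the consecutive-triple statement, not the coincidence of the two cyclic orders (which is what condition (4) of Definition \ref{df:properly-ordered} requires). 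To be fair, the paper's own proof silently makes the same reduction, simply asserting the existence of a discrepant triple ``along neighboring edges'', so you have reproduced its argument, gap included; but closing it needs an additional idea, e.g. running the crossing argument for discrepant non-consecutive triples (where the outer paths share no edge at the node, so the node does not supply the first same-direction intersection) or otherwise excluding star-polygon slope patterns at nodes of valence at least five. A smaller shared caveat: the cyclic order by slopes is only defined when the slopes at the node are distinct, so the degenerate cases $\alpha_2\in\{\alpha_1,\alpha_3\}$ and $\alpha_1=\alpha_3$ (where your ``cone at infinity'' argument degenerates) need separate treatment.
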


\begin{proof}
Let $z_1$, $z_2$ and $z_3$ be a triple of zigzag paths
passing through a node of the dimer model
along neighboring edges at the node
whose cyclic order around the node
does not respect the cyclic order
determined by their slopes.
Then two of them must intersect more than once
in the same direction on the universal cover.
\end{proof}

The converse is also true:

\begin{lmm} \label{lm:p-c}
A properly-ordered dimer model is consistent.
\end{lmm}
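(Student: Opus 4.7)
The first two clauses of Definition~\ref{df:consistency} are verbatim the first two of Definition~\ref{df:properly-ordered}, so the whole content of the lemma is to derive the third consistency condition from properly-orderedness: no pair of zigzag paths intersects the universal cover in the same direction more than once.

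The plan is to argue by contradiction. Suppose $z$ and $w$ are zigzag paths on the universal cover meeting at two points $p_1, p_2$ in the same direction. First I would rule out the possibility $[z]=[w]$ in $H_1(T)$: if the homology classes agreed, clause~(3) of properly-orderedness forbids $z$ and $w$ from sharing any node, contradicting the existence of the intersections; hence $[z]\neq[w]$. Next, among all such quadruples $(z,w,p_1,p_2)$ I would choose one for which the disk $D\subset\bR^2$ bounded by the arc $z_{12}\subset z$ from $p_1$ to $p_2$ and the arc $w_{12}\subset w$ from $p_1$ to $p_2$ is innermost (for instance, minimizing the number of faces of the dimer model contained in $D$).

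The main step is to extract a cyclic-order mismatch at some node, contradicting clause~(4). At each corner $p_i$, the paths $z$ and $w$ exit into $D$ via cyclically consecutive edges at the endpoints of the shared edge, so by clause~(4) the slopes $[z]$ and $[w]$ are cyclically consecutive among the slopes at those nodes. Any other zigzag path $u$ that passes into $D$ must also pass out of it; by the innermost choice of $D$, no crossing of $u$ with $z_{12}$ or $w_{12}$ inside $D$ can be same-direction, since otherwise $u$ would form with $z$ or $w$ a strictly smaller same-direction bigon inside $D$. Applying clause~(4) at each of these opposite-direction crossings should then restrict the slope $[u]$ to the cyclic arc between $[z]$ and $[w]$. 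Compiling these slope constraints for all zigzag paths meeting $\partial D$ and combining them with the cyclic-adjacency statements at $p_1$ and $p_2$ eventually yields a node at which the cyclic order of the zigzag paths through it disagrees with the cyclic order of their slopes.

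\textbf{Main obstacle.} The delicate part is the slope-tracking argument: making rigorous that an opposite-direction crossing of $u$ with $z_{12}$ at a node $v$ together with clause~(4) genuinely restricts $[u]$ to the stated cyclic arc, and that amalgamating such restrictions around the boundary of $D$ really does produce a node violating clause~(4). This is the combinatorial analog of the elementary fact that two straight lines of distinct slopes meet exactly once, and its extraction from clause~(4) appears to require the innermost-bigon reduction rather than a purely local argument at $p_1$ and $p_2$.
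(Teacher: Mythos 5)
Your plan correctly isolates what has to be proved (the third condition of Definition \ref{df:consistency}), but it is not a proof: the step you yourself flag as the ``main obstacle'' is precisely the mathematical content of the lemma, and it is left unresolved. Two concrete problems. First, the claim that, for an innermost bigon $D$, any other zigzag path $u$ entering $D$ can only cross $z_{12}$ or $w_{12}$ in the opposite direction is unjustified: a single same-direction crossing of $u$ with $z_{12}$ inside $D$ does not force a second same-direction crossing of $u$ with $z$, let alone one lying on $z_{12}$, so no ``strictly smaller same-direction bigon inside $D$'' is produced and the innermost choice gives you nothing here. Second, even granting opposite-direction crossings only, you have not shown how condition (4) of Definition \ref{df:properly-ordered} at those crossing nodes confines $[u]$ to a cyclic arc, nor how ``amalgamating'' these constraints around $\partial D$ manufactures a node where the local cyclic order disagrees with the slope order; condition (4) is a statement about one node at a time, and turning it into a global statement about the bigon is exactly the hard part. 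There is also a side issue at the start: if $z$ and $w$ are two distinct lifts of the same zigzag path on the torus, condition (3) (a statement about pairs of zigzag paths) does not obviously forbid them from sharing a node, so the reduction to $[z]\neq[w]$ needs more care.

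For comparison, the paper's proof avoids looking for a local violation of condition (4) altogether. It takes a ``last'' and a ``second last'' same-direction intersection of $z_1$ and $z_2$, launches a third zigzag path $z_3$ from the node of the second last intersection, and uses condition (4) at that node together with the fact that the slope governs the asymptotic behaviour of a lift to conclude that $z_3$ must again meet $z_1$ or $z_2$ twice in the same direction, with a slope strictly ``between'' the previous ones in cyclic order. Iterating produces infinitely many zigzag paths with pairwise distinct slopes, contradicting the finiteness of the set of zigzag paths on the torus. If you want to salvage your approach, you would need to either carry out the slope-tracking argument in full or switch to an iteration of this kind, where properly-orderedness is only ever invoked at a single node per step and the contradiction comes from finiteness of slopes rather than from a single offending node.
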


\begin{proof}
Assume for contradiction that
a properly-ordered dimer model has a pair
$z_1 = (e_k)_{k \in \bZ}$ and
$z_2 = (f_\ell)_{\ell \in \bZ}$
of zigzag paths
intersecting in the same direction more than once
on the universal cover.
We show that there is an infinite sequence
$(z_3, z_4, \ldots)$ of zigzag paths on the universal cover
with distinct slopes,
which contradicts the finiteness of the set of slopes.

An intersection
$
 (e_{i} = f_{j+u}, e_{i+1} = f_{j+u-1}, \ldots, e_{i+u}=f_{j})
$
of $z_1$ and $z_2$
where $i, j \in \bZ$ and $u \in 2 \bN$
is called a {\em last} intersection
if $(e_k)_{k>i+u}$ does not intersect $(f_\ell)_{\ell>j+u}$.
Another intersection
$
 (e_{i'} = f_{j'+u'}, e_{i'+1} = f_{j'+u'-1},
  \ldots, e_{i'+u'}=f_{j'})
$
for $i'+u'<i$
is called the {\em second last} intersection along $z_1$
if $(e_k)_{i'+u'<k<i}$
does not intersect $(f_\ell)_{\ell < j}$.
Although a last intersection may not be unique,
and not all last intersections may have the second last intersection,
the assumption that
$z_1$ and $z_2$ intersect in the same direction more than once
implies the existence of at least one last intersection
having the second last intersection.

Figure \ref{fg:p-c-proof} shows
a part of a pair of zigzag paths
near a last and the second last intersections.
We have suppresed the rest of the paths,
which may also intersect this part.
We choose the names $z_1$ and $z_2$
for these zigzag paths,
so that the node $e_{i+u} \cap e_{i+u+1}$
at the last intersection is white
as in Figure \ref{fg:p-c-proof1}.
Although the second last intersection in this figure
may be the one along $z_2$
instead of the one along $z_1$,
this does not affect the discussion below.

\begin{figure}[htbp]
\begin{minipage}{.45 \linewidth}
\centering
\input{p-c-proof.pst}
\caption{A pair of intersections of zigzag paths}
\label{fg:p-c-proof}
\end{minipage}
\begin{minipage}{.45 \linewidth}
\centering
\input{p-c-proof1.pst}
\caption{$z_3$ bending over to the left}
\label{fg:p-c-proof1}
\end{minipage}
\begin{minipage}{\linewidth}
\centering
\input{p-c-proof2.pst}
\caption{$z_3$ bending over to the right}
\label{fg:p-c-proof2}
\end{minipage}
\end{figure}

Now choose the third zigzag path
$
 z_3 = (g_m)_{m \in \bZ}
$
as the one going in the direction opposite to $z_2$
from the second last intersection
as shown in dotted arrow in Figure \ref{fg:p-c-proof1},
so that
$
 g_0 = f_{j'-1}.
$
Note that $z_2$ and $z_3$ may not intersect at
$
 g_0 = f_{j'-1}
$
if the node at $g_0 \cap g_1$ is divalent.
The cyclic order on the set of zigzag paths,
passing through the node
$g_{-1} \cap g_0$
where $z_1$, $z_2$ and $z_3$ meet,
is given by $(z_1, z_2, z_3, \cdots)$.
Since the dimer model is properly-ordered,
the slopes of $z_1$, $z_2$ and $z_3$ have
this cyclic order.
The slope of a zigzag path determines
the asymptotic behavior of the zigzag path
on the universal cover,
so that the zigzag paths $z_1$, $z_2$ and $z_3$
must have this cyclic order
outside of a compact set.
Combined with the assumption
that the intersection
$
 (e_{i} = f_{j+u}, e_{i+1} = f_{j+u-1}, \ldots, e_{i+u}=f_{j})
$
is a last intersection of $z_1$ and $z_2$,
this implies that
\begin{itemize}
 \item
$(g_m)_{m>0}$ intersects $(e_k)_{k>i'+u'}$, or
\item
$(g_m)_{m>0}$ intersects $(f_\ell)_{\ell>j'+u'}$.
\end{itemize}
Schematic pictures of examples of the former case
and the latter case are shown
in Figure \ref{fg:p-c-proof1} and
Figure \ref{fg:p-c-proof2}.
It may also happen that $(g_m)_{m>0}$ intersect
both $(e_k)_{k>i'+u'}$ and $(f_\ell)_{\ell>j'+u'}$.

In the former case,
the part $(g_{m})_{m >0}$ of the zigzag path $z_3$
intersects the zigzag path $z_1$
in the same direction more than once,
and one can find a pair of 
a last and the second last intersection
as in Figure \ref{fg:p-c-proof},
where the solid arrow represents $z_1$
and the gray arrow represents $z_3$ this time.
Now we can repeat the same argument to obtain
another zigzag path $z_4$ such that
\begin{itemize}
 \item
the cyclic order of the slopes is $(z_2, z_3, z_4, z_1)$, and
 \item
$z_4$ intersects $z_1$ or $z_3$
in the same direction more than once.
\end{itemize}

In the latter case,
the lack of self-intersection
of zigzag paths in a properly-ordered dimer model
implies that
the part $(g_m)_{m<0}$ of the zigzag path $z_3$
intersects the part $(f_\ell)_{\ell>j'+u'}$
of the zigzag path $z_2$,
and one can find
a pair of a last and the second last intersections
as in Figure \ref{fg:p-c-proof},
where the solid arrow represents $z_3$ and
the gray arrow represents $z_2$ this time.
Now we can repeat the same argument to obtain
another zigzag path $z_4$ such that
\begin{itemize}
 \item
the cyclic order of the slopes is $(z_2, z_4, z_3, z_1)$, and
 \item
$z_4$ intersects $z_2$ or $z_3$
in the same direction more than once.
\end{itemize}

In both cases,
we obtain a zigzag path $z_4$
whose slope is different from the slope
of any of $z_1$, $z_2$ and $z_3$.
By continuing this process,
we obtain an infinite sequence
$(z_5, z_6, \ldots)$ of zigzag paths
with distinct slopes,
and Lemma \ref{lm:p-c} is proved.

\end{proof}

By combining Lemma \ref{lm:c-p} with Lemma \ref{lm:p-c},
one obtains the equivalence
between consistency condition in Definition \ref{df:consistency}
and Gulotta's condition:

\begin{prp}
A dimer model is consistent
if and only if it is properly-ordered.
\end{prp}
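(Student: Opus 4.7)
The plan is to deduce the proposition directly from Lemmas \ref{lm:c-p} and \ref{lm:p-c}, supplementing them with one small additional observation.

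First I would handle the implication properly-ordered $\Rightarrow$ consistent. The first two defining conditions of Definition \ref{df:consistency} (no homologically trivial zigzag path and no zigzag path with a self-intersection) are literally the first two conditions in Definition \ref{df:properly-ordered}. The third consistency condition, namely that no pair of zigzag paths intersect in the same direction more than once on the universal cover, is exactly what Lemma \ref{lm:p-c} derives by contradiction under the properly-ordered hypothesis. So this direction follows with nothing further to do.

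For the converse consistent $\Rightarrow$ properly-ordered, conditions (1) and (2) of Definition \ref{df:properly-ordered} again transfer verbatim from Definition \ref{df:consistency}, and condition (4), the compatibility of the cyclic order of zigzag paths around a node with the order given by slopes, is precisely Lemma \ref{lm:c-p}. The only remaining task is to check condition (3): no two distinct zigzag paths sharing a common node have the same homology class. I would argue contrapositively. Suppose $z_1 \ne z_2$ share a node $v$ and have the same homology class $(a,b)\in \bZ^2$. Lift to the universal cover through a common preimage $\tilde v$ of $v$; since $(a,b)$ is the homology class of each $z_i$, each lift $\tilde z_i$ is invariant under translation by $(a,b)$, so $\tilde z_1$ and $\tilde z_2$ share the entire orbit $\{\tilde v + n(a,b) \mid n \in \bZ\}$ of common nodes. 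If at any such node the two lifts share an incident edge, then the maximum-turn rule defining zigzag paths propagates that coincidence in both directions, forcing $\tilde z_1 = \tilde z_2$ and contradicting $z_1 \ne z_2$. Otherwise the two lifts cross transversally at infinitely many common nodes, and by translation-periodicity these crossings occur with the same mutual orientation; this yields a pair of intersections of $z_1$ and $z_2$ in the same direction in the sense of Definition \ref{df:intersection}, contradicting the third condition of consistency.

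The main obstacle is the careful execution of condition (3), in particular reconciling the node-based notion of ``sharing'' with the edge-based notion of intersection in Definition \ref{df:intersection}, and accommodating possible divalent nodes through the parity condition $u - v \in 2\bZ$ in that definition. The key tool is the $(a,b)$-periodicity of any lifted zigzag path of homology class $(a,b)$, which converts a single incidental coincidence into an infinite orbit of coincidences and hence into the needed same-direction double intersection forbidden by consistency.
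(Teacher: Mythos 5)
Your overall skeleton matches the paper's: one direction is Lemma \ref{lm:p-c}, the other combines the two verbatim conditions with Lemma \ref{lm:c-p}, and you rightly notice that condition (3) of Definition \ref{df:properly-ordered} (no two zigzag paths in the same homology class share a node) still needs an argument --- the paper's own proof just cites the two lemmas and is silent on this point. The gap is in the argument you supply for condition (3), and it occurs at both ends of your dichotomy. First, if the two lifts share an edge, the maximum-turn rule does \emph{not} force $\tilde z_1=\tilde z_2$: every edge lies on exactly two zigzag paths, which traverse it in opposite directions, so two distinct zigzag paths routinely share edges, and the propagation argument only applies when the shared edge is traversed in the same direction. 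Second, and more seriously, if the lifts share a node without sharing an edge they do not cross there at all: at a node a zigzag path enters and leaves through two cyclically adjacent edges, two disjoint such adjacent pairs can never interleave in the cyclic order at the node, so the two paths stay on one side of each other there. In particular this is not an intersection in the sense of Definition \ref{df:intersection}, which requires a common edge $e_i=f_j$ together with the parity condition (compare the right-hand picture in Figure \ref{fg:zigzag-intersection}), so no contradiction with the third consistency condition can be extracted from common nodes alone.

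The sound part of your argument is the periodicity step: if the two $(a,b)$-periodic lifts do have a genuine intersection (a shared run of odd length), then translating by $(a,b)$ produces infinitely many intersections, and since both paths run in the direction $(a,b)$ these are intersections in the same direction, contradicting consistency. What is left open is precisely the remaining case, which is the actual content of condition (3): the two lifts meet only in ``touches'' --- common nodes with no common edge, or shared runs of even length --- and never cross. Nothing in your proposal rules this out, and it is not excluded by merely quoting Definition \ref{df:consistency}, since that definition constrains intersections only. Closing this case requires an additional idea (for instance, analysing a third zigzag path through the shared node that separates $z_1$ and $z_2$ in the cyclic order there, in the spirit of the proofs of Lemmas \ref{lm:c-p} and \ref{lm:p-c}); as it stands, the step ``consistent $\Rightarrow$ condition (3)'' is not proved.
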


\bibliographystyle{plain}
\bibliography{bibs}

\ \\

\noindent
Akira Ishii

Department of Mathematics,
Graduate School of Science,
Hiroshima University,
1-3-1 Kagamiyama,
Higashi-Hiroshima,
739-8526,
Japan

{\em e-mail address}\ : \ akira@math.sci.hiroshima-u.ac.jp

\ \\

\noindent
Kazushi Ueda

Department of Mathematics,
Graduate School of Science,
Osaka University,
Machikaneyama 1-1,
Toyonaka,
Osaka,
560-0043,
Japan.

{\em e-mail address}\ : \  kazushi@math.sci.osaka-u.ac.jp

\end{document}